\title[Connections]%
{Affine connections on complex manifolds of algebraic dimension zero}
\date{June 2, 2015}
\author[S. Dumitrescu]{Sorin Dumitrescu}
\address{Université  Nice Sophia Antipolis,  CNRS,  Laboratoire J.-A. Dieudonn\'e, UMR 7351, 06108 NICE Cedex 02, France}
\email{dumitres@unice.fr}
\author[B. McKay]{Benjamin McKay}
\address{University College Cork, Cork, Ireland}
\email{b.mckay@ucc.ie}
\keywords{holomorphic affine connections, algebraic dimension}
\subjclass[2000]{Primary 53B21; Secondary 53C56, 53A55}
\date{\today}
\newtheorem{theorem}{Theorem}
\newtheorem{lemma}{Lemma}
\newtheorem{proposition}{Proposition}
\theoremstyle{remark}
\newtheorem{definition}{Definition}
\newtheorem{example}{Example}
\newcounter{remarkCounter}
\newcommand*{\pr}[1]{\ensuremath{\left(#1\right)}}
\newcommand*{\of}[1]{\ensuremath{\!\pr{#1}}}
\newcommand*{\C}[1]{\ensuremath{\mathbb{C}^{#1}}}
\newcommand*{\Z}[1]{\ensuremath{\mathbb{Z}^{#1}}}
\newcommand*{\GL}[1]{\ensuremath{\operatorname{GL}\of{#1}}}
\newcommand*{\homology}[2]{\ensuremath{H_{#1}\of{#2}}}
\newcommand*{\LieDer}{\ensuremath{\EuScript L}}
\newcommand*{\pd}[2]{\frac{\partial #1}{\partial #2}}
\newcommand*{\defeq}{\mathrel{\vcenter{\baselineskip0.5ex \lineskiplimit0pt
                     \hbox{\scriptsize.}\hbox{\scriptsize.}}}%
                     =}
\newcommand*{\p}[1]{\ensuremath{\partial_{#1}}}
\newcommand*{\Lie}[1]{\ensuremath{\mathfrak{\lowercase{#1}}}}
\newcommand*{\MakeLie}[1]{\expandafter\def\csname Lie#1\endcsname{\Lie{#1}}}
\def\lst{G,H}
\lst\do{\expandafter\MakeLie \i}
\def\@tocline#1#2#3#4#5#6#7{\relax
  \ifnum #1>\c@tocdepth 
  \else
    \par \addpenalty\@secpenalty\addvspace{#2}%
    \begingroup \hyphenpenalty\@M
    \@ifempty{#4}{%
      \@tempdima\csname r@tocindent\number#1\endcsname\relax
    }{%
      \@tempdima#4\relax
    }%
    \parindent\z@ \leftskip#3\relax \advance\leftskip\@tempdima\relax
    #5\leavevmode\hskip-\@tempdima #6\nobreak\relax
    ,~#7\par
    \endgroup
  \fi}
\begin{document}
\begin{abstract}   
We prove that any compact complex manifold with finite fundamental group and algebraic dimension zero admits no holomorphic affine connection or holomorphic conformal structure.
\end{abstract}

\maketitle
\begin{center}
\begin{varwidth}{\textwidth}
\tableofcontents
\end{varwidth}
\end{center}

\section{Introduction}

We conjecture that any compact complex manifold of finite fundamental group with a holomorphic Cartan geometry is isomorphic to the model of its Cartan geometry, a homogeneous bundle of complex tori over a rational homogeneous variety.
Roughly, the dynamics of the fundamental group on the universal cover form the essential ingredient in the classification of holomorphic Cartan geometries, so trivial dynamics gives a trivial geometry.
As a first step, we try to study this question in the extreme case of manifolds of algebraic dimension zero, where there are few tools available from algebraic geometry.

A holomorphic affine connection is a holomorphic connection on the holomorphic tangent bundle of a complex manifold.
A compact K\"ahler manifold admits a holomorphic affine connection just when it has a finite holomorphic unramified covering by a complex torus \cite{Inoue/Kobayashi/Ochiai:1980}. In this case the holomorphic affine connection pulls back to the complex torus to a translation invariant affine connection.

Nevertheless, some interesting compact complex manifolds which are not K\"ahler admit holomorphic affine connections. Think, for example, of the Hopf manifold associated to a linear contraction of complex Euclidean space. Also the parallelizable manifolds $G/   \Gamma$ associated to a complex Lie group $G$ and a lattice $\Gamma \subset G$ admit holomorphic affine connections. In~\cite{Ghys:1995} Ghys constructs exotic deformations of quotients  $SL(2,\mathbb{C})/ \Gamma$ which are nonparallelizable complex manifolds   admitting holomorphic affine connections. Moreover Ghys' quotients do not admit nonconstant meromorphic functions.

Our conjecture predicts that holomorphic affine connections do not exist on compact complex manifolds of finite fundamental group.
We prove this here under the additional hypothesis that our manifold's meromorphic functions are constant.

\section{Main results}\label{section:Notations and main result}

The \emph{algebraic dimension} of a complex manifold \(M\) is the transcendence degree of the field of meromorphic functions of \(M\) over the field of complex numbers.
A compact complex manifold \(M\) has algebraic dimension zero just when every meromorphic function on \(M\) is constant.

 The main theorem of the article is the following.

\begin{theorem}\label{theorem:main}
No compact complex manifold with finite fundamental group and algebraic dimension zero admits a holomorphic affine connection.
\end{theorem}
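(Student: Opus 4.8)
The plan is to reduce to the simply connected case, then to the flat case, and to finish by a developing‑map argument; the reduction to the flat case is the real content.

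\textbf{Reductions.} Let $M$ be compact complex with $\pi_1(M)$ finite, algebraic dimension zero, and carrying a holomorphic affine connection $\nabla$. Since $\pi_1(M)$ is finite, the universal covering $\widetilde M\to M$ is a finite unramified holomorphic covering, so $\widetilde M$ is again a compact complex manifold. For a finite holomorphic map $\pi\colon X\to Y$ of compact complex manifolds, every $f\in\mathcal M(X)$ is a root of the monic polynomial $\prod_{x\in\pi^{-1}(y)}\bigl(T-f(x)\bigr)$, whose coefficients are meromorphic on $Y$; hence $\mathcal M(X)$ is algebraic over $\mathcal M(Y)$ and $a(X)=a(Y)$. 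Thus $a(\widetilde M)=a(M)=0$. The connection pulls back to a holomorphic affine connection on $\widetilde M$, and replacing it by its symmetrization $\nabla-\tfrac12 T$ (the torsion $T$ being a holomorphic tensor field) we may assume it is torsion‑free. So it suffices to show that a compact, simply connected complex manifold of algebraic dimension zero carrying a torsion‑free holomorphic affine connection has vanishing curvature, and then to reach a contradiction.

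\textbf{The endgame, assuming flatness.} Suppose $\widetilde M$ is simply connected and compact and carries a flat torsion‑free holomorphic affine connection. Then $\widetilde M$ is locally biholomorphic, compatibly with the connection, to $\mathbb{C}^n$ with its standard connection; as $\widetilde M$ is simply connected the affine monodromy vanishes, and these local charts assemble into a global developing map $D\colon\widetilde M\to\mathbb{C}^n$, a local biholomorphism. Then $D(\widetilde M)$ is open (local biholomorphism), closed (image of a compact set), and nonempty, hence all of $\mathbb{C}^n$ by connectedness; but $D(\widetilde M)$ is compact and $\mathbb{C}^n$ is not. This contradiction proves the theorem once flatness is established.

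\textbf{The crux: flatness.} It remains to show the curvature $R$ vanishes, and this is where I expect the real work to lie. The tensors $R,\nabla R,\nabla^2 R,\dots$ are holomorphic sections of bundles associated to $T\widetilde M$, and at each point their common $\GL{n,\mathbb{C}}$‑orbit (under change of frame) is a well‑defined invariant of the connection. The plan is to apply the holomorphic version of Gromov's open‑dense‑orbit theorem for the rigid geometric structure $\nabla$: the pseudogroup of local automorphisms of $(\widetilde M,\nabla)$ has orbits that are open over the complement $U$ of a proper analytic subvariety. The hypothesis $a(\widetilde M)=0$ is then used to rule out moduli — any $\GL{n,\mathbb{C}}$‑invariant rational function of $(R,\nabla R,\dots)$ descends to a meromorphic function on $\widetilde M$, hence is constant, so the curvature and its covariant derivatives are of a single fixed orbit‑type on $U$; with Gromov's theorem this upgrades $(\widetilde M,\nabla)|_U$ to a genuinely locally homogeneous connection. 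The local affine Killing fields then extend by analytic continuation, simple connectedness killing the monodromy, to global holomorphic vector fields on $\widetilde M$ preserving $\nabla$, so their Lie algebra $\mathfrak g$ acts with a dense open orbit. One then analyzes this action: show the orbit is all of $\widetilde M$ by controlling the analytic bad locus $\widetilde M\setminus U$ (again using $a(\widetilde M)=0$), so that $\widetilde M$ is a compact complex homogeneous space with an invariant torsion‑free holomorphic affine connection; the structure of such spaces, together with $a(\widetilde M)=0$ and finiteness of $\pi_1$, then forces the curvature to vanish (or yields an outright contradiction, such as a torus quotient or a nonconstant meromorphic function). The delicate points — running the Gromov‑type argument in the compact holomorphic category, globalizing the Killing fields across the bad locus, and the homogeneous‑space step — are the main obstacles.
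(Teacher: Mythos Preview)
Your reductions and your endgame are exactly right and match the paper. The use of Gromov's open-dense-orbit theorem plus Nomizu--Amores--Gromov extension to produce a finite-dimensional Lie algebra $\mathfrak g$ of global holomorphic Killing fields acting with a dense open orbit is also what the paper does (its Proposition~\ref{proposition:toroidal}).

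The gap is in your ``crux'' paragraph, at the step where you propose to ``show the orbit is all of $\widetilde M$ \dots\ so that $\widetilde M$ is a compact complex homogeneous space''. This is not what happens, and in general it is false: the complement $S$ of the open orbit is the zero locus of $v_1\wedge\cdots\wedge v_n$, a hypersurface representing the anticanonical class, and the paper shows it is typically \emph{nonempty} (indeed, has at least $n$ irreducible components). There is no mechanism, from $a(\widetilde M)=0$ alone, that forces $S=\varnothing$; so the homogeneous-space classification route does not get off the ground. Relatedly, the paper does \emph{not} prove that the original connection $\nabla$ is flat.

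What the paper actually does at this juncture is two things you are missing. First, a centralizer trick: apply the same Gromov/extension argument to the augmented rigid structure $(\nabla,v_1,\dots,v_n)$ to get a new Lie algebra $\mathfrak g'$ acting with dense open orbit and commuting with $\mathfrak g$; this forces $\mathfrak g'=\mathfrak g$ to be \emph{abelian} of dimension $n$. Second---and this is the technical heart (Theorem~\ref{theorem:invariance})---one shows that any abelian Lie algebra acting with dense open orbit and preserving a holomorphic affine connection also preserves a \emph{new} flat torsion-free holomorphic affine connection. The argument turns $A$ into a commutative algebra via $vw:=\nabla_v w$, shows the vanishing ideals $I_{s_0}$ at smooth points of $S$ are principal, produces a totally geodesic foliation $F_I$ along which $\nabla$ is already flat, and then builds $\nabla'$ by declaring a frame adapted to $F_I$ (leafwise parallel fields plus the transverse $v_\mu$) to be parallel. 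It is this $\nabla'$, not $\nabla$, to which the developing-map endgame applies. Your sketch has the right scaffolding but the wrong bridge across $S$.
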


The principal ingredient of the proof  is the following result which might be of independent interest.

\begin{theorem}\label{theorem:invariance}
If a complex abelian Lie algebra   acts holomorphically on a complex manifold \(M\) with a dense open orbit preserving a holomorphic affine connection, then it also preserves a \emph{flat} torsion-free holomorphic affine connection.
\end{theorem}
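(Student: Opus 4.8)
The plan is to reduce to the case where the abelian Lie algebra has dimension $n=\dim_{\mathbb C}M$ and acts by commuting holomorphic vector fields forming a frame over the open orbit, to recast an invariant connection on the orbit as a commutative algebra structure, and then to correct that algebra, inside the family of flat torsion‑free invariant connections on the orbit, to one whose connection extends holomorphically across the complement of the orbit.

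First I would set up the reduction. Let $\mathfrak a$ be the abelian Lie algebra, $U\subseteq M$ the dense open orbit, $p\in U$. Since $\mathfrak a$ is abelian, an element vanishing at $p$ is invariant under the flows of all other elements, so it vanishes along the whole orbit $U$, hence (by analyticity and density) on all of $M$; thus the evaluation $\mathfrak a\to H^{0}(M,TM)$ has the same kernel as $\mathfrak a\to T_{p}M$, and its image $V$ is an $n$‑dimensional space of commuting holomorphic vector fields $X_{1},\dots,X_{n}$ which frame $TM$ over $U$. Replacing the given connection $\nabla$ by $\nabla-\tfrac12 T^{\nabla}$ — still holomorphic, $\mathfrak a$‑invariant, and now torsion‑free on all of $M$ — I may assume $\nabla$ is torsion‑free. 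Invariance of $\nabla$ forces its Christoffel symbols in the frame $(X_{i})$ to be $\mathfrak a$‑invariant holomorphic functions on $U$, hence constant. So $X_{i}\cdot X_{j}:=\nabla_{X_{i}}X_{j}$ defines a commutative product on $V$, whose left multiplications $L_{X_{i}}(v)=X_{i}\cdot v$ satisfy $R^{\nabla}(X_{i},X_{j})=[L_{X_{i}},L_{X_{j}}]$; thus $\nabla|_{U}$ is flat precisely when this product is associative, and the flat torsion‑free $\mathfrak a$‑invariant connections on $U$ are exactly those arising from commutative \emph{associative} algebra structures on $V$ (the zero product giving the connection $\nabla^{0}$ for which the frame is parallel). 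If the frame is global we are done with $\nabla^{0}$; the content of the theorem is that one can always choose such a product whose connection extends over $M\setminus U$, and here $\nabla^{0}$ itself will not do in general, since the dual coframe of $(X_{i})$ typically acquires poles there.

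The key observation for the extension is that the failure of associativity is carried by genuine global holomorphic tensors on $M$: for each $k$ the curvature contraction $R^{\nabla}(\,\cdot\,,\,\cdot\,)X_{k}$ and the second covariant derivative $\nabla^{2}X_{k}$ are $\mathfrak a$‑invariant holomorphic sections of $T^{*}M^{\otimes 2}\otimes TM$, and along $U$ their components are $[L_{X_{i}},L_{X_{j}}]X_{k}$ and the associator $X_{i}\cdot(X_{j}\cdot X_{k})-(X_{i}\cdot X_{j})\cdot X_{k}$ respectively. I would use these to correct the product iteratively. Consider the decreasing chain of ideals $V\supseteq V\cdot V\supseteq V\cdot(V\cdot V)\supseteq\cdots$, stabilising at a subalgebra $W$ with $V\cdot W=W$; under the frame map $M\times V\to TM$ these map to $\mathfrak a$‑invariant subsheaves that extend over all of $M$, giving a filtration of $V$ adapted to extendability. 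One shows the product is already associative on the stable part $W$ (using holomorphy of the extension to rule out exotic behaviour of the top term), and on the complementary nilpotent part one corrects the product order by order, at each step subtracting the symmetrised associator of the current product — extendable by the observation above — which strictly raises the filtration degree of the remaining non‑associativity, so the process terminates after finitely many steps. The resulting commutative associative product differs from the original by a finite sum of extendable tensors, and so yields a flat torsion‑free $\mathfrak a$‑invariant connection on all of $M$.

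The main obstacle is precisely this last step: making the inductive correction rigorous and verifying that each correction genuinely lies in the span of tensors that extend across $M\setminus U$ — equivalently, that the obstruction algebra is nilpotent and that the associator is a coboundary assembled from $R^{\nabla}(\,\cdot\,,\,\cdot\,)X_{k}$ and $\nabla^{2}X_{k}$. Controlling the interplay between the algebraic correction on $V$ and holomorphic extendability over the possibly singular analytic set $M\setminus U$ is where the real difficulty lies; the torsion‑free reduction, the constancy of the Christoffel symbols, and the identification of $R^{\nabla}(\,\cdot\,,\,\cdot\,)X_{k}$ and $\nabla^{2}X_{k}$ as the extendable carriers of non‑associativity are the comparatively routine ingredients.
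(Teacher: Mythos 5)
Your setup is sound and coincides with the paper's opening moves: reduce to the torsion-free case, observe that invariance forces the Christoffel symbols in the commuting frame to be constant, so that \(\nabla|_U\) is encoded by a commutative product on \(V\), and note that an invariant torsion-free connection is flat exactly when the product is associative (commutativity together with \([L_a,L_b]=0\) gives \((ab)c=c(ab)=L_cL_ab=L_aL_cb=a(bc)\)). But the entire content of the theorem lies in producing an associative product whose connection extends holomorphically across \(S=M\setminus U\), and that is precisely where your argument has a genuine gap. The iteration you propose --- subtracting the symmetrized associator of the current product and claiming this strictly raises the filtration degree of the remaining non-associativity along \(V\supseteq V\cdot V\supseteq V\cdot(V\cdot V)\supseteq\cdots\) --- is asserted rather than proved: there is no argument that the associator of the corrected product drops in your filtration, no proof that the corrections stay within the class of products whose connections extend across the (possibly singular) hypersurface \(S\), and no proof that the product is associative on the stable ideal \(W\). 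The observation that \(R^{\nabla}(\cdot,\cdot)X_k\) and \(\nabla^2X_k\) are globally defined holomorphic tensors is correct, but by itself it gives no control over the algebra structure.

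What is missing is the geometric input from \(S\) that pins the product down completely. The paper shows that for a smooth point \(s_0\in S\) the ideal \(I_{s_0}=\{v\in A : v(s_0)=0\}\) is \emph{principal}: in geodesic coordinates every Killing field vanishing at \(s_0\) is linearized, the resulting commuting matrices all have kernel equal to the hyperplane \(T_{s_0}S\), hence rank one, hence are scalar multiples of one another. From principality one deduces that a suitable basis \(\{v_i\}\) of the ideal \(I\) generated by all somewhere-vanishing fields satisfies \(v_iv_j=\delta_{ij}\lambda_i v_i\), which makes \(\nabla\) flat along the totally geodesic leaves of the associated foliation; the flat invariant connection is then obtained not by modifying the product on \(A\) abstractly but by declaring parallel a mixed frame consisting of leafwise \(\nabla\)-parallel sections \(w_i\) together with the complementary fields \(v_\mu\), and checking that this connection is holomorphic across \(S\) because the foliation is nonsingular there. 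Without an analogue of the principality lemma your correction scheme has nothing to anchor it, and I do not see how to complete the proof along the lines you sketch.
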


The same proof  yields the obvious analogue  of Theorem~\ref{theorem:invariance} in the real analytic category, but the smooth category remains a mystery.   The more general result, for Lie algebras which might not be abelian, is not true in higher dimension: the canonical action of $SL(2,\mathbb{C})$ on $SL(2,\mathbb{C} ) / \Gamma$ preserves the standard connection for which the right-invariant vector fields are parallel, but there are no torsion-free flat affine connections on $SL(2,\mathbb{C} )/ \Gamma$~\cite{Dumitrescu:2009}. Nevertheless, we conjecture that a finite dimensional complex Lie algebra acting holomorphically with a dense open orbit and preserving a holomorphic affine connection always preserves a {\it locally homogeneous} holomorphic affine connection. In the real analytic category and for surfaces this  result was proved in~\cite{Dumitrescu/Guillot:2013}.   

Another result of this article deals with holomorphic conformal structures:

\begin{theorem}  \label{theorem:conformal structures}  No compact complex manifold with finite fundamental group and algebraic dimension zero admits a holomorphic conformal structure.
\end{theorem}

The article is structured as follows.  
Section~\ref{section:algebraic dimension zero} describes the notion of Gromov's rigid geometric structure and shows that compact complex simply connected  manifolds of algebraic dimension zero admit holomorphic rigid geometric structures if and only if they admit toroidal structures (see Proposition~\ref{proposition:rigid} and Proposition~\ref{proposition:toroidal}).
In Section~\ref{section:invariance} several lemmas prove the fact that toroidal actions preserving a holomorphic affine connection always preserve a {\it flat} holomorphic affine connection (Theorem~\ref{theorem:invariance}). Since compact complex simply connected manifolds do not admit flat holomorphic affine connections, this yield to the proof of Theorem~\ref{theorem:main}. Section~\ref{section:conforme} deals with holomorphic conformal structures and contains the proof of Theorem~\ref{theorem:conformal structures}. Eventually, Section~\ref{section:conclusions} presents some conclusions and open questions related to the main results of the article.

\section{Holomorphic geometry in algebraic dimension zero} \label{section:algebraic dimension zero}

Holomorphic affine connections  are geometric structures of algebraic type  which are rigid in Gromov's sense; see the nice expository survey \cite{DAmbra/Gromov:1991} for the precise definition. Roughly speaking the rigidity  comes from the fact
that local biholomorphisms fixing a point and  preserving a connection linearize in exponential coordinates, so they are completely determined by their differential at the fixed point. More generally, the local biholomorphisms preserving a rigid geometric structure are completely determined by a finite jet~\cite{DAmbra/Gromov:1991}.  Also Gromov noticed  that  all known geometric structures are of algebraic kind, in the sense that the natural action of jets of local bihomorphisms on the jets of the geometric structure is algebraic.

So holomorphic affine connections and holomorphic  parallelizations of the holomorphic tangent bundle are examples of rigid holomorphic geometric structures of algebraic type in Gromov's sense. Other important examples of holomorphic rigid geometric structures of algebraic type  are holomorphic Riemannian metrics and holomorphic conformal structures. They will be defined in section \ref{section:conforme}.

In this section we will use the  result obtained by the first author in~\cite{Dumitrescu:2011} (see Corollary 2.2 on page 35) asserting that on complex manifolds with algebraic dimension zero,
rigid  meromorphic  geometric structures of algebraic type are locally homogeneous away from a nowhere dense analytic subset (see also \cite{Dumitrescu:2001b}).  This means that the set of  local holomorphic vector fields preserving a rigid meromorphic geometric structure is transitive away from a nowhere dense  analytic subset.

The same  result was  also proved to be true for holomorphic Cartan connections with algebraic model in~\cite{Dumitrescu:2010b}. In particular, the result applies to holomorphic affine connections on manifolds with algebraic dimension zero. Here we give a more precise result.

A \emph{toroidal structure}, or \emph{toroidal action}, on a complex manifold \(M\) of complex dimension \(n\defeq \dim_{\C{}}{M}\) is a holomorphic group action of the \emph{toroidal group} \(\pr{\C{*}}^n\) on \(M\) with a dense open orbit. 

Two toroidal structures given by two holomorphic actions $G_1$ and $G_2$ of \(\pr{\C{*}}^n\) on $M$ are isomorphic if there exists a biholomorphism of $M$ conjugating the $G_1$-action with the $G_2$-action.

\begin{example}
We thank Misha Verbitsky for suggesting to us the following simple example of toroidal  simply connected compact manifolds of algebraic dimension zero. This example is constructed by deformation of the standard complex structure on a simply connected Calabi-Eckmann manifold and it is a very  particular case of the family of  moment-angle  manifolds constructed in \cite{Panov:2012, Panov:2013aa} as a generalization of the LMVB manifolds. Recall that the LMVB manifolds, constructed and studied  by Lopez de Medrano, Meersseman, Verjovsky and Bosio in  \cite{Bosio:2001, MB:2006, Meersseman:2000} (see also \cite{Lescure/Meersseman:2002,Winkelmann:2008, Ishida:2013aa})   are toroidal  and many of them admit complex affine structures. In \cite{Panov:2013aa} the authors prove that generic  moment-angle manifolds are of algebraic dimension zero.

Consider  the quotient of $(\mathbb{C}^2 \setminus \{ 0 \} ) \times (\mathbb{C}^2 \setminus \{0 \} )$  by  the $\mathbb{C}$-action given by the one-parameter group 
 $\left(  \begin{array}{cc}
                                                                 e^t   &   0\\
                                                                 0     &  e^{\alpha t} \\
                                                                 \end{array} \right)$, with $\alpha \in \mathbb{C} \setminus \mathbb{R}$. This action is holomorphic, proper and free so the quotient is a complex manifold.

The embedding  of $S^3$ as the unit sphere  in $\mathbb{C}^2$ shows that the quotient $M$ is diffeomorphic to $S^3 \times S^3$. This complex structure on $S^3 \times S^3$ fibers over $P^1(\mathbb{C}) \times P^1(\mathbb{C})$. Deform the  $\mathbb{C}$-action to an action of a generic one-parameter subgroup  in $GL(2, \mathbb{C}) \times GL(2, \mathbb{C})$. Then the corresponding complex structure on $M$ is of algebraic dimension zero, as proved in \cite{Panov:2013aa}. Moreover, since the $\mathbb{C}$-action is diagonalizable, its centralizer in  $GL(2, \mathbb{C}) \times GL(2, \mathbb{C})$ is  isomorphic to \(\pr{\C{*}}^4\). This induces a holomorphic \(\pr{\C{*}}^3\)-action with an open dense orbit on the quotient $M$: a toroidal structure.
\end{example}

The \emph{characteristic subvariety} \(S \subset M\) of a holomorphic abelian Lie algebra action with dense open orbit is the complement of the dense open orbit.
If we write out vector fields \(\Set{v_a}\) spanning the Lie algebra, then \(S\) is precisely the set where
\[
0 = v_1 \wedge v_2 \wedge \dots \wedge v_n.
\]
In particular, \(S \subset M\) is a closed complex hypersurface representing the anticanonical bundle of \(M\).

\begin{proposition}\label{proposition:rigid}
A holomorphic action of an abelian complex Lie algebra is a holomorphic rigid geometric structure if and only if it has a dense open orbit.
\end{proposition}
\begin{proof}
Near a generic point of our complex manifold, the action is a local holomorphic parallelism, so locally rigid.
Near a generic point \(s_0 \in S\) of the characteristic variety, there is a unique vector field \(v_1\) (up to constant rescaling) from our Lie algebra vanishing at that point.
The order of vanishing of \(v_1\) is invariant under the Lie algebra action, because the Lie algebra is abelian.
In a basis \(v_1,v_2,\dots,v_n\) of the Lie algebra, the other vector fields \(v_2, \dots, v_n\) are linearly independent at \(s_0\), spanning \(T_{s_0} S\).

If \(v_1\) vanishes to first order along \(S\) near \(s_0\), then we pick local holomorphic coordinates \(z_1, z_2, \dots, z_n\) in which \(v_1=z_1 \p{z_1}\) and \(v_j=\p{z_j}\) for \(j=2,3,\dots,n\).
Then the holomorphic affine connection \(\nabla=d\) in those coordinates is an invariant of the Lie algebra action, so the action is rigid.

If \(v_1\) vanishes to second order along \(S\) near \(s_0\), then we pick local holomorphic coordinates \(z_1, z_2, \dots, z_n\) in which \(v_1=z_1^2 \p{z_1}\) and \(v_j=\p{z_j}\) for \(j=2,3,\dots,n\).
The holomorphic projective connection \([\nabla]=[d]\) in those coordinates is an invariant of the Lie algebra action, so the action is rigid.

Suppose that \(v_1\) vanishes to some higher order \(q\) along \(S\) near \(s_0\).
Locally quotient by the action of \(v_2, \dots, v_n\) to get a vector field \(v_1\) in one variable, say \(v_1=z^q \p{z}\).
Let \(w=z^{q-1}\) and compute that \(v_1=(q-1)w^2 \p{w}\).
So we first map from coordinates \(z^1,\dots,z^n\) to variables \(w,z^2,\dots,z^n\).
Then our vector fields drop to vector fields in those coordinates.
The pullback of the projective connection is not a projective connection anymore, but it is an invariant geometric structure.
If we look at all local choices of complex functions \(z\) invariant under \(v_2,\dots,v_n\), for which \(v_1=z^q \p{z}\), we see that they are unique up to replacing by \(Z=Z(z)\) for any holomorphic \(Z(z)\) with \(Z(0)=0\) and 
\[
Z^{q-1}=\frac{z^{q-1}}{1+cz^{q-1}},
\]
for any constant \(c \in \C{}\).
But this then determines \(w=z^{q-1}\) up to projective transformation.
So near each smooth point of the characteristic variety, there is a ``multivalued quotient projective connection''.
At order \(q+1\) and higher, the formal automorphisms of the structure have Taylor coefficients determined by lower order Taylor coefficients, local rigidity.

At an arbitrary point, the equations on Taylor coefficients of a formal local biholomorphism which force it to be an automorphism of our vector fields up to a given order are linear in highest derivatives.
Near the generic point of \(S\), these linear equations, at some order, suffice to determine each coefficient as a function of lower order coefficients. 
So these linear equations can only fail to determine coefficients inductively when we sit at some point of \(S\) which lies in a higher codimension stratum.
But then by Hartogs extension, these equations continue to determine coefficients at those points as well, as the rank of the linear expression in the highest derivatives cannot drop.
\end{proof}

If a toroidal action on compact complex manifold \(M\) has a fixed point then \(M\) is a toric projective variety \cite{Ishida/Karshon:2012}.

A by-product of Bochner's theorem is  the fact that any toroidal action with reductive stabilizer at every point is locally linearizable. 
This is not true for general holomorphic $\mathbb{C}^n$-actions  with an open dense  orbit. Think
of the standard  $\mathbb{C}^n$-action by translations on $P^n(\mathbb{C})$ in the neighborhood of points situated on the divisor at infinity. Nevertheless Theorem \ref{theorem:invariance} can be seen as a global  linearization result for $\mathbb{C}^n$-actions preserving a holomorphic affine connection and admitting an open dense orbit.

Let us contrast with non-abelian locally free holomorphic actions admitting an open dense orbit. Guillot classified in \cite{Guillot:2007} holomorphic equivariant compactifications of  quotients of $SL(2, \mathbb{C})/ \Gamma$, where $\Gamma$ is a discrete subgroup in $SL(2, \mathbb{C})$. Unlike  toroidal  actions, those $SL(2, \mathbb{C})$-actions are nonrigid holomorphic geometric structures  of algebraic type; they are rigid only on the open dense orbit $U$ of the $SL(2, \mathbb{C})$-action. Indeed, here the local symmetries of the $SL(2, \mathbb{C})$-action on $U$ pull-back to $SL(2,\mathbb{C})$ as left-invariant  vector fields: they generate right translations in $SL(2, \mathbb{C})$ and  generically they do not extend to all of the  compactification space. The extendability of local symmetries does not hold for those  holomorphic geometric structures: they must be nonrigid by a result of Gromov (\cite{DAmbra/Gromov:1991}, p. 73, 5.15).

\begin{proposition}  \label{proposition:toroidal}
Suppose that \(M\) is a compact, connected and simply connected complex manifold of  complex dimension \(n\) and of algebraic dimension zero.
Then \(M\) admits a holomorphic rigid geometric structure  of algebraic type if and only if \(M\) admits a toroidal structure.
The toroidal structure is then unique.
The toroidal group is a cover of  the identity component \(G\) of the biholomorphism group of \(M\). 
The toroidal action preserves \emph{all} meromorphic  geometric structures of algebraic type  on \(M\).

Every bimeromorphism of \(M\) is a biholomorphism of the open toroidal orbit.
The bimeromorphism group of \(M\) is a semidirect product \(G \rtimes \Gamma\) where \(\Gamma\) is the discrete subgroup of the bimeromorphism group fixing some point \(m_0 \in M\) of the dense open toroidal orbit.
Each element of \(\Gamma\) is determined by its action on \(T_{m_0} M\), giving an injective morphism of Lie groups \(\Gamma \to \GL{T_{m_0} M} \cong \GL{n,\C{}}\). 
The complement in \(M\) of the open toroidal orbit is a complex hypersurface containing at least \(n\) analytic components.
\end{proposition}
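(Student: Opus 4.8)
The plan is to establish the stated equivalence first and then read off the remaining assertions from the structure produced in its proof. One direction is immediate: a toroidal structure is already a holomorphic rigid geometric structure, by the previous proposition. For the converse, assume \(M\) carries a holomorphic rigid geometric structure \(\phi\). By the cited theorem of the first author~\cite{Dumitrescu:2011}, in algebraic dimension zero \(\phi\) is locally homogeneous off a nowhere dense analytic subset, so the local holomorphic Killing fields of \(\phi\) span the tangent space at every point of a connected dense open set \(U\). Since \(M\) is compact and simply connected, local Killing fields of a rigid geometric structure extend to global holomorphic vector fields~\cite{DAmbra/Gromov:1991}, and \(H^0(M,TM)\) is finite dimensional by compactness; so we obtain a finite dimensional complex Lie algebra \(\LieG\subseteq H^0(M,TM)\) which is infinitesimally transitive on \(U\). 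Its orbits are open and \(U\) is connected, so \(G\defeq\operatorname{Aut}^0(M)\) has a single dense open orbit \(O=G/H\).

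The core of the argument is the identification of \(G\), and this is the step I expect to be the main obstacle, for it is here that algebraic dimension zero and simple connectedness must be fed into the structure theory of automorphism groups and of orbit fibrations of compact complex manifolds. First, \(G\) is abelian: a nontrivial Levi subgroup \(L\subseteq G\) would act on \(O\) with isotropy \(L_x\), and the canonical fibration \(L/L_x\to L/P\) onto the flag manifold \(L/P\) would, after meromorphic extension over \(M\), pull nonconstant meromorphic functions from the rational variety \(L/P\) back to \(M\), contradicting \(a(M)=0\); a further analysis of orbits rules out the nonabelian solvable case (for instance a nonabelian nilpotent or affine subgroup would again force incompatibilities with \(a(M)=0\) or with simple connectedness through its orbit closures). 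With \(G\) abelian, the stabilizer \(H=G_{m_0}\) of a point \(m_0\in O\) is the stabilizer of \emph{every} point of \(O\), hence acts trivially on the dense set \(O\) and so trivially on \(M\); as \(G\) acts effectively, \(H=\{1\}\), \(\dim_{\C{}}G=n\), and \(G\) acts simply transitively on \(O\cong G\). Finally, \(S\defeq M\setminus O\), the zero set of \(v_1\wedge\dots\wedge v_n\) for a basis \(v_i\) of \(\LieG\), is a nonempty complex hypersurface---otherwise \(M\cong G\) would be a compact complex Lie group, hence a complex torus, contradicting simple connectedness---and the isotropy groups along \(S\) are positive dimensional; by Bochner's theorem~\cite{Bochner:1945} the maximal compact subgroup of each such isotropy group is linearizable near its fixed point, and may be taken to act by diagonal unitary matrices, so, \(G\) being abelian and \(M\) compact, each isotropy group is an algebraic torus; as \(G\) is generated by these subgroups together with the open orbit, it has no additive one-parameter subgroup, i.e. \(G\) is a quotient of \(\pr{\C{*}}^n\) by a discrete subgroup. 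Composing \(\pr{\C{*}}^n\to G\subseteq\operatorname{Aut}(M)\) then produces the toroidal action, with open orbit \(O\).

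Uniqueness of the toroidal structure follows since \(G=\operatorname{Aut}^0(M)\) is intrinsic and \(\pr{\C{*}}^n\to G\) is the unique connected covering of \(G\) by a product of copies of \(\C{*}\); this also yields that the toroidal group covers \(G\). For a meromorphic geometric structure \(\psi\), viewed as a global meromorphic section of an associated bundle \(E\): because \(a(M)=0\), any \(\operatorname{rk}E+1\) global meromorphic sections of \(E\) are linearly dependent over \(\C{}\)---the coefficients of a dependence are meromorphic functions, hence constant---so the space of such sections is finite dimensional, the connected group \(G\) acts linearly on it, and a short argument with Lie derivatives shows \(\psi\) is fixed; thus the toroidal action preserves every meromorphic geometric structure on \(M\).

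A bimeromorphism \(f\) of \(M\) transports the toroidal structure, a rigid meromorphic geometric structure, to another such structure, which by uniqueness is the original one; hence \(f\) normalizes the toroidal action and restricts to a biholomorphism of \(O\). Composing with an element of \(G\) to fix the base point \(m_0\in O\), and using that \(G\) is normal in the bimeromorphism group, we obtain \(\operatorname{Bim}(M)=G\rtimes\Gamma\) with \(\Gamma\) the subgroup fixing \(m_0\); since \(H=\{1\}\) we have \(\Gamma\cap G=\{1\}\), and \(\Gamma\) is the discrete group of components of \(\operatorname{Bim}(M)\). Each \(\gamma\in\Gamma\) preserves a rigid geometric structure and fixes \(m_0\), so is determined by a jet there; in an adapted coordinate system about \(m_0\), in which the toroidal structure is the standard torus action, a structure-preserving biholomorphism fixing a point of the open orbit is linear, so \(\gamma\) is determined by its derivative at \(m_0\), giving the injection \(\Gamma\hookrightarrow\GL{T_{m_0}M}\cong\GL{n,\C{}}\). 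Lastly, near any point of \(S\) the toroidal structure is locally isomorphic to the standard torus action, whose characteristic subvariety is the union of its \(n\) coordinate hyperplanes, each the closure of an \((n-1)\)-dimensional \(G\)-orbit; these \(n\) orbits have disjoint interiors in \(S\) and hence pairwise distinct closures, so the hypersurface \(S\) has at least \(n\) irreducible components.
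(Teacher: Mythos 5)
Your overall skeleton (local homogeneity from \cite{Dumitrescu:2011}, Nomizu--Amores--Gromov extension over the simply connected $M$, an abelian $n$-dimensional $\LieG$ acting simply transitively on a dense open orbit, then the semidirect product via the dual meromorphic $1$-forms having constant transition coefficients) matches the paper, but the step you yourself identify as ``the main obstacle'' --- abelianness of $G$ --- is genuinely not proved. Your Levi-subgroup argument assumes without justification that the fibration $L/L_x\to L/P$ extends meromorphically over $M$ and pulls back nonconstant meromorphic functions, and the nonabelian solvable case is dismissed with ``a further analysis of orbits,'' which is not an argument. The idea you are missing is the paper's juxtaposition trick: form the rigid structure $g'=(g,v_i)$ by adjoining the Killing fields $v_i$ of $g$ to $g$ itself, and apply the local homogeneity theorem again. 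The Killing algebra $\LieG'$ of $g'$ is transitive on a dense open set and centralizes $\LieG$; since any vector field commuting with a frame of commuting fields on a dense open set is a constant-coefficient combination of them, $\LieG'$ is its own centralizer, whence $\LieG'=\LieG$ is abelian. The same trick (adjoin any meromorphic geometric structure to $g'$) gives $\LieG$-invariance of all meromorphic structures; your substitute --- that $G$ acts linearly on a finite-dimensional space of meromorphic sections and ``a short argument with Lie derivatives shows $\psi$ is fixed'' --- is false as stated (a connected group can act linearly without fixing a given vector, e.g.\ by scaling).

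Two further steps do not go through as written. (1) To see that the action integrates to $(\mathbb{C}^*)^n$, you argue that the isotropy groups along $S$ are algebraic tori and that ``$G$ is generated by these subgroups together with the open orbit''; the latter is vacuous ($G$ acts simply transitively on the open orbit), and even granting that each stabilizer is a subtorus, nothing prevents $G\cong\mathbb{C}\times(\mathbb{C}^*)^{n-1}$ with all stabilizers inside the second factor. The paper instead shows that every meromorphic $1$-form with vanishing periods around the components of $S$ integrates to a meromorphic, hence constant, function, so the period pairing between $H_1(M-S,\mathbb{Z})$ and $\LieG^*$ is nondegenerate; this produces a lattice spanning $\LieG$ over $\mathbb{C}$ and hence a covering $(\mathbb{C}^*)^n\to G$. (2) Your count of at least $n$ components of $S$ uses the local model of the standard torus action near a point whose characteristic subvariety is all $n$ coordinate hyperplanes; that local model occurs only at a $G$-fixed point, whose existence you have not established (and cannot expect from Euler-characteristic arguments, since e.g.\ $\chi(S^3\times S^3)=0$). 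The paper again derives the count from the nondegeneracy of the period pairing: choosing one loop around each analytic component of $S$, the corresponding elements of the lattice already span $\LieG$, so there are at least $n$ components.
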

\begin{proof}
By the main theorem in \cite{Dumitrescu:2011} (see also~\cite{Dumitrescu:2010b})  any holomorphic rigid geometric structure \(g\) of algebraic type on $M$ is locally homogeneous on an open dense set (away from a nowhere dense analytic subset $S$), meaning that the Lie algebra of  local holomorphic vector fields on $M$ preserving 
$g$ is transitive on an open dense set in $M$. Moreover, since $M$ is simply connected, by  a  result due to Nomizu~\cite{Nomizu:1960} and generalized by Amores~\cite{Amores:1979} and then by Gromov~\cite{DAmbra/Gromov:1991} (p. 73, 5.15)
these local vector fields preserving $g$ extend to all of $M$ to form a finite dimensional complex Lie algebra $\LieG$ of  (globally defined)  holomorphic vector fields $v_i$ acting with a dense open orbit in $M$ and preserving $g$.

Now put together $g$ and the $v_i$ to form another rigid holomorphic geometric structure of algebraic type  $g'=\pr{g,v_i}$. (See \cite{DAmbra/Gromov:1991} for details about the fact that the juxtaposition of a rigid geometric structure with another geometric structure is still a rigid geometric structure in Gromov's sense.) Considering $g'$ instead of $g$ and repeating the same proof as before, the complex Lie algebra $\LieG'$ of those holomorphic vector fields preserving $g'$ acts with a dense open orbit in $M$. But preserving $g'$ means preserving both $g$ and the $v_i$. Hence $\LieG'$ lies in  the center  of $\LieG$.  In particular $\LieG'$ is a complex abelian Lie algebra acting with a dense open orbit in $M$. 
At each point \(m_0\) in the open \(\LieG'\)-orbit, the values \(v(m)\) of the vector fields \(v \in \LieG'\) span the tangent space to the \(\LieG'\)-orbit, i.e. span the tangent space. 
Any linear relation between the values \(v(m)\) of the vector fields \(v \in \LieG'\) is \(\LieG'\)-invariant, so holds throughout the open orbit, and so holds everywhere.
Therefore all vector fields in \(\LieG'\) are linearly independent at every point of the open orbit of \(\LieG'\).

Pick a basis \(v_1, v_2, \ldots, v_n \in \LieG'\).
Notice that any  holomorphic vector field commuting with all \(v_i\), for \(i \in \{1, \ldots, n \}\), is a constant coefficient linear combination of those \(v_i\): this is true on the open \(\LieG'\)-orbit and, consequently, on all of \(M\).  It follows that the centralizer of the Lie algebra \(\LieG'\) in the Lie algebra of all holomorphic vector fields on \(M\) is exactly  \(\LieG'\).
This implies  that \(\LieG'=\LieG\) and  hence  \(\LieG\) is a complex abelian Lie algebra of  dimension \(n\) acting with a dense open orbit.

The proof is the same if we replace $g'$ by the extra rigid  meromorphic  geometric structure  of algebraic type $g''$ on $M$ which is the juxtaposition of $g'$, with any meromorphic geometric structure of algebraic type globally defined on $M$.
This implies that any meromorphic geometric structure on $M$ is \(\LieG\)-invariant.

Therefore all meromorphic vector fields on \(M\) belong to \(\LieG\).
If a connected Lie group acts by bimeromorphisms, then its Lie algebra acts by meromorphic vector fields, so as a Lie subalgebra of \(\LieG\).

Define meromorphic 1-forms \(\omega^a\) on \(M\) by
\[
\omega^a(\xi) \defeq 
\frac{v_1 \wedge v_2 \wedge \dots \wedge v_{a-1} \wedge \xi \wedge v_{a+1} \wedge \dots \wedge v_n}
{v_1 \wedge v_2 \wedge \dots \wedge v_n}.
\]

Take a bimeromorphic map \(f \colon M \to M\).
Then \(f^* \omega^a = h^a_b \omega^b\), for some meromorphic functions \(h^a_b\) forming an invertible matrix.
But meromorphic functions on \(M\) are constant, so \(f\) acts on \(\LieG\) by a linear transformation.
In particular, \(f\) extends to be defined on the dense open \(\LieG\)-orbit, which is therefore invariant under bimeromorphism.
If \(f\) acts trivially on \(\LieG\), then we can pick any point \(m_0\) in the dense open \(\LieG\)-orbit and we can find some element \(e^v \in e^{\LieG}\) in the biholomorphism group of \(M\) which takes \(m_0\) to \(f\of{m_0}\), so we can write \(f=e^v g\), so that \(g\of{m_0}=m_0\) for some bimeromorphism \(g\) of \(M\).
But then \(g\) is uniquely determined by its action on \(\LieG\), i.e. on the tangent space \(T_{m_0} M\), since it commutes with exponentiation of the vector fields, so we have a unique decomposition of the bimeromorphism group of \(M\) into a semidirect product \(G \rtimes \Gamma\) where \(\Gamma \subset \GL{\LieG}=\GL{T_{m_0} M}=\GL{n,\C{}}\) and \(G=e^{\LieG}\). Notice that  compactness of $M$ is only required here in order to ensure
completeness of vector fields  in $\LieG$.

Every meromorphic differential form on \(M\) is closed because it is $\LieG$-invariant. In particular, every meromorphic 1-form on $M$ is closed,  while only \(0\) is exact.
The Albanese dimension of \(M\) is zero.
The indefinite integral of any meromorphic 1-form over the simply connected manifold \(M\) is a meromorphic function on some covering space of the complement of the simple poles of the 1-form.
Therefore every meromorphic 1-form has a simple pole on some component of \(S\).

Let \(\Delta \defeq \homology{1}{M-S,\Z{}}=\homology{1}{G,\Z{}}\).
Then \(\Delta \subset \LieG\) and \(G=\LieG/\Delta\).
Pair \(\gamma \in \Delta, \omega \in \LieG^* \mapsto \int_{\gamma} \omega \in \C{}\).
If this vanishes for some \(\omega\), for every \(\gamma\), then \(\omega\) integrates around each component of \(S\) to a meromorphic function.
But then \(\omega=0\).
Therefore the pairing is nondegenerate.
Define an injection \(\gamma \in \Delta \mapsto v_{\gamma} \in \LieG\) by \(\Delta \subset \LieG\).
Then \(\Delta\) spans \(\LieG\) over \(\C{}\).
Therefore \(\Delta\) contains a complex basis of \(\LieG\), so the action of \(\LieG\) drops to an action of \(\pr{\C{*}}^n\).
If some meromorphic 1-form integrates to zero around each analytic component of \(S\), then its indefinite integral is meromorphic.
So picking one \(\gamma \in \Delta\) around each analytic component of \(S\), the associated \(v_{\gamma} \in \LieG\) already span \(\LieG\).
Therefore the number of analytic components of \(S\) is at least \(n\).
\end{proof}

\section{Abelian groups preserving holomorphic connections}\label{section:invariance}

Suppose that \(M\) is a  complex manifold and \(A\) is an abelian Lie algebra of holomorphic vector fields, acting with a dense open orbit (or equivalently, acting with an open orbit on every component of \(M\)) and preserving a holomorphic connection \(\nabla\) on \(TM\).
In this section we will prove theorem~\vref{theorem:invariance}: that \(A\) also preserves a flat holomorphic affine connection.
Every holomorphic affine connection induces a unique torsion-free affine connection with the same geodesics (see for instance~\cite{Inoue/Kobayashi/Ochiai:1980}). 
We assume that \(\nabla\) is torsion-free and that \(M\) is connected without loss of generality.

The complement of the open orbit is a complex hypersurface \(S \subset M\), possibly singular, where the vector fields \(v \in A\) are not linearly independent, representing the anticanonical line bundle.
If this hypersurface is empty, then in terms of any basis \(v_a \in A\),
\[
\bar{\nabla} v_a \pr{X^b v_b} \defeq \pr{\LieDer_{v_a} X^b} v_b.
\]
is an invariant flat  torsion-free connection.
So we assume that \(S\) is not empty. Theorem ~\vref{theorem:invariance} will be proved through the following sequence of Lemmas.

\begin{lemma}  \label{lemma:first}
We make \(A\) into a commutative, perhaps nonassociative, algebra by \(vw\defeq\nabla_v w\).
\end{lemma}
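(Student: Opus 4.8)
The plan is to verify the two things that a commutative (nonassociative) algebra structure actually requires here: symmetry of the product \(vw=\nabla_v w\), and the fact that this product is internal to \(A\). That the product is \(\C{}\)-bilinear is automatic from the defining properties of a connection, and no associativity is claimed. Symmetry is immediate: since \(\nabla\) is a holomorphic connection and \(v,w\in A\) are holomorphic vector fields, \(\nabla_v w\) is again a holomorphic vector field, and since we have reduced to \(\nabla\) torsion-free,
\[
vw-wv=\nabla_v w-\nabla_w v=[v,w]=0
\]
because \(A\) is abelian.

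For the closure statement I would exploit that every \(u\in A\) preserves \(\nabla\). For a holomorphic vector field \(u\) one has the identity
\[
(\LieDer_u\nabla)(v,w)=[u,\nabla_v w]-\nabla_{[u,v]}w-\nabla_v[u,w].
\]
Taking \(u\in A\), the left-hand side vanishes, and \(A\) being abelian gives \([u,v]=[u,w]=0\); hence \([u,\nabla_v w]=0\) for all \(u\in A\). Thus \(\nabla_v w\) lies in the centralizer of \(A\) inside the Lie algebra of holomorphic vector fields on \(M\).

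It then remains to identify that centralizer with \(A\), which is exactly the identity-theorem argument already used in the proof of Proposition~\ref{proposition:toroidal}: at a point \(m_0\) of the dense open orbit the values \(v(m_0)\), \(v\in A\), span \(T_{m_0}M\), so one may pick \(v_1,\dots,v_n\in A\) linearly independent on a connected neighbourhood \(U\) of \(m_0\); if a holomorphic vector field \(X\) commutes with all of \(A\), then on \(U\) write \(X=\sum_a f^a v_a\) and read off from \(0=[v_b,X]=\sum_a (v_b f^a)v_a\) that \(df^a=0\), so each \(f^a\) is constant on \(U\); hence \(X-\sum_a f^a v_a\) vanishes on the nonempty open set \(U\) and therefore on all of the connected manifold \(M\), so \(X\in\operatorname{span}(v_1,\dots,v_n)\subseteq A\). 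Applying this with \(X=\nabla_v w\) shows \(vw=\nabla_v w\in A\). The only step needing care is this last identification of the centralizer with \(A\); everything else is a one-line computation. (As a byproduct one gets \(\dimC{A}=n=\dimC{M}\), which will presumably be used in the sequel.)
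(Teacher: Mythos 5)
Your proof is correct and follows essentially the same route as the paper's: commutativity comes from torsion-freeness plus \([v,w]=0\), and closure comes from the \(A\)-invariance of \(\nabla\), which forces \(\nabla_v w\) to be a constant-coefficient combination of a basis of \(A\) on the open orbit and hence everywhere by analytic continuation. The only cosmetic difference is that you package the closure step as ``\(\nabla_v w\) lies in the centralizer of \(A\), which equals \(A\)'' (via the \(\LieDer_u\nabla\) identity and the argument already used in Proposition~\ref{proposition:toroidal}), whereas the paper writes \(\nabla_{v_a}v_b=\Gamma^c_{ab}v_c\) in the frame \(\{v_a\}\) on \(M-S\) and observes that the functions \(\Gamma^c_{ab}\) are \(A\)-invariant, hence constant.
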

\begin{proof}
Take any basis \(\Set{v_a} \subset A\).
Then on \(M-S\), these \(\Set{v_a}\) form a parallelism, so we can write
\[
\nabla_{v_a} v_b = \Gamma^c_{ab} v_c
\]
for unique functions \(\Gamma^c_{ab}\).
By \(A\)-invariance, \(\Gamma^c_{ab}\) are constants.
By continuity, the relation \(\nabla_{v_a} v_b = \Gamma^c_{ab} v_c\) continues to hold everywhere on \(M\).
The torsion of \(\nabla\) is
\begin{align*}
\nabla_v w - \nabla_w v - [v,w],
\end{align*}
so by torsion-freedom \(vw=wv\).
\end{proof}

\begin{lemma}
Pick a point \(s_0 \in S\) and let 
\[
I_{s_0}\defeq \Set{v \in A|v\of{s_0}=0} \subset A.
\]
Then \(I_{s_0} \subset A\) is an ideal.
\end{lemma}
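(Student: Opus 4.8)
The plan is to exploit the algebra structure from the previous lemma together with the fact, established just before the lemma, that $S$ represents the anticanonical bundle, i.e. $S$ is exactly the zero locus of $v_1 \wedge \cdots \wedge v_n$ for any basis $v_a \in A$. Fix a basis $v_1, \ldots, v_n$ of $A$ adapted to $s_0$, meaning $v_1, \ldots, v_k$ span $I_{s_0}$ (these vanish at $s_0$) and $v_{k+1}, \ldots, v_n$ are linearly independent at $s_0$. Since $A$ is abelian, the distribution spanned by $v_1, \ldots, v_n$ away from $S$ extends, and the algebra relation $\nabla_{v_a} v_b = \Gamma^c_{ab} v_c$ holds globally with constant $\Gamma^c_{ab}$. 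What I want to show is that $I_{s_0}$ is closed under the product $vw = \nabla_v w$ in the sense that $\nabla_v w \in I_{s_0}$ whenever $v \in I_{s_0}$ and $w \in A$ (arbitrary); by commutativity of the algebra this gives the ideal property.

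First I would argue that the values at $s_0$ of the vector fields in $A$, modulo $I_{s_0}$, identify $A/I_{s_0}$ with $T_{s_0} M$ via $v \mapsto v(s_0)$; this is injective by definition of $I_{s_0}$. Next, the key observation: if $v \in I_{s_0}$, then $v(s_0) = 0$, and I want to evaluate $(\nabla_v w)(s_0)$. Since $\nabla$ is a connection on $TM$, the value $(\nabla_v w)(s_0)$ depends only on $v(s_0) = 0$ and on the $1$-jet of $w$, so naively $(\nabla_v w)(s_0) = 0$. But that would show $I_{s_0}$ is an ideal far too cheaply and without using $A$-invariance, so I should be careful: the subtlety is whether the relation $\nabla_{v_a} v_b = \Gamma^c_{ab} v_c$, valid everywhere, is compatible with $v(s_0) = 0$. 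Writing $v = \sum_{a \le k} \lambda^a v_a \in I_{s_0}$ and $w = \sum_b \mu^b v_b$ with constants $\lambda, \mu$, we get $\nabla_v w = \sum_{a \le k, b} \lambda^a \mu^b \Gamma^c_{ab} v_c$, a constant-coefficient element of $A$; evaluating at $s_0$ and using $v_a(s_0) = 0$ for $a \le k$ shows $(\nabla_v w)(s_0) = \sum_{a \le k, b, c > k} \lambda^a \mu^b \Gamma^c_{ab} v_c(s_0)$. I must show this vanishes, i.e. $\Gamma^c_{ab} = 0$ whenever $a \le k$ and $c > k$.

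The hard part, and the main obstacle, is precisely establishing $\Gamma^c_{ab} = 0$ for $a \le k < c$. Here is where the geometry of $S$ enters. The idea is to differentiate the relation along $S$: near $s_0$, $I_{s_0}$ annihilates a whole neighborhood in the orbit $G s_0 \subset S$ (since $A$ is abelian, the stabilizer is constant along orbits, as noted in the proof of the earlier proposition). So each $v_a$ with $a \le k$ vanishes identically on $G s_0$, and differentiating $\nabla_{v_a} v_b = \Gamma^c_{ab} v_c$ in directions tangent to $Gs_0$ — along which the $v_b$ with $b > k$ restrict to a parallelism of $T(Gs_0)$ — forces the component $\Gamma^c_{ab}$ with $c > k$ to vanish, because $\nabla_{v_a} v_b$ at points of $Gs_0$, with $v_a$ vanishing on all of $Gs_0$, has vanishing component along the orbit directions. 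Concretely, restricting to $Gs_0$ and using that $v_a|_{Gs_0} = 0$ for $a \le k$: for any curve $c(t)$ in $Gs_0$, $\nabla_{\dot c} v_a = $ derivative of $v_a$ along $c$ plus lower order $= 0$ in the orbit directions, while also $\nabla_{v_a} v_b = \nabla_{v_b} v_a$ (commutativity of the algebra) $= \nabla_{v_b}(\text{something vanishing on } Gs_0)$, whose orbit-tangential part is zero since $v_b$, $b > k$, is tangent to $Gs_0$ there. This pins down $\Gamma^c_{ab} = 0$ for $a \le k < c$, hence $(\nabla_v w)(s_0) = 0$ for all $v \in I_{s_0}$, $w \in A$, so $\nabla_v w \in I_{s_0}$, and by commutativity $I_{s_0}$ is an ideal.
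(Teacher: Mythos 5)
Your ``naive'' one-line argument --- the one you dismiss as too cheap --- is in fact the paper's entire proof, and it is correct. A connection is $\mathcal{O}$-linear (tensorial) in its lower argument, so $\pr{\nabla_v w}\of{s_0}$ depends only on the value $v\of{s_0}=0$, hence vanishes; since the previous lemma guarantees $\nabla_v w \in A$, it follows that $\nabla_v w \in I_{s_0}$, and commutativity of the product handles $\nabla_w v$. Your worry that this ``doesn't use $A$-invariance'' is misplaced: the invariance was already consumed in the previous lemma, where it is what makes $\Gamma^c_{ab}$ constant and hence makes $vw=\nabla_v w$ land in $A$ at all. Once the product is known to take values in $A$, being an ideal is purely a pointwise statement at $s_0$, and tensoriality settles it. Note also that tensoriality immediately yields the identity you labour over: $0=\pr{\nabla_{v_a}v_b}\of{s_0}=\sum_{c>k}\Gamma^c_{ab}v_c\of{s_0}$ with the $v_c\of{s_0}$, $c>k$, linearly independent, forces $\Gamma^c_{ab}=0$ for $a\le k<c$ in one line.

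The detour you substitute is therefore unnecessary, and as written it is also the weakest part of the proposal: the claim that ``differentiating $\nabla_{v_a}v_b=\Gamma^c_{ab}v_c$ in directions tangent to $Gs_0$ \ldots{} forces the component $\Gamma^c_{ab}$ with $c>k$ to vanish'' is asserted rather than proved, and the subsequent sentence mixes covariant derivatives of $v_a$ along curves in $Gs_0$ with $\nabla_{v_a}v_b$ in a way that never cleanly isolates the quantity you need. Since the conclusion is correct and the gap is only in an argument you did not need to make, the fix is simply to delete the detour and keep the first computation through $\pr{\nabla_v w}\of{s_0}=0$.
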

\begin{proof}
If \(v \in I_{s_0}\) and \(w \in A\) then clearly \(\nabla_v w\) vanishes wherever \(v\) vanishes.
In other words, \(I_{s_0} \subset A\) is an ideal.
\end{proof}

\begin{lemma} \label{lemma:foliation}
Take an ideal \(I \subset A\).
For each point \(m \in M\), let
\[
I(m)\defeq\Set{v(m)|v \in I} \subset T_m M.
\]
There is a unique nowhere singular holomorphic foliation \(F_I\) on \(M\) so that the tangent spaces of the leaves are
\[
T_m F_I = I(m)
\]
for every \(m \in M-S\).
The normal bundle of \(F_I\) is flat along each leaf of \(F_I\), with trivial leafwise holonomy.
\end{lemma}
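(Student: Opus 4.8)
The plan is to build the distribution tangent to $F_I$ by extending, across $S$, the evident distribution on $M-S$, and then to read off all the stated properties; the one real difficulty is the extension itself.

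Fix a basis $v_1,\dots,v_k$ of $I$ and complete it to a basis $v_1,\dots,v_n$ of $A$. On $M-S$ these vector fields form a parallelism, so $\nabla_{v_a}v_b=\Gamma^c_{ab}v_c$ with constants $\Gamma^c_{ab}$, exactly as in the first lemma of this section. Since $I$ is an ideal of the commutative algebra $(A,\cdot)$ we have $\nabla_{v_a}v_b\in I$ as soon as $a\le k$ or $b\le k$, that is, $\Gamma^c_{ab}=0$ whenever $c>k$ and $\min(a,b)\le k$. In particular $\nabla_X Y$ is a section of $E:=\langle v_1,\dots,v_k\rangle$ for every local vector field $X$ and every section $Y$ of this rank-$k$ holomorphic subbundle $E\subset TM|_{M-S}$: so $E$ is $\nabla$-parallel on $M-S$, and it is involutive since the $v_a$ commute.

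The heart of the proof is to extend $E$ to a holomorphic subbundle $\tilde E\subset TM$ over all of $M$. Since $S$ has codimension one, Hartogs's theorem does not apply, and I would instead argue with parallel transport. Fix $p\in S$ and local holomorphic coordinates centred at $p$, so that the connection matrix of $\nabla$ is holomorphic near $p$. For a complex line $\ell$ through $p$ not contained in $S$ — these are dense among lines through $p$, and each meets $S$ only at $p$ inside a small enough ball — parallel transport of $\nabla$ along $\ell$ is the fundamental solution of a linear ODE with coefficients holomorphic across $p$; as $E$ is $\nabla$-parallel, $E|_{\ell\setminus\{p\}}$ is carried by this transport and hence extends holomorphically over $p$ to a $k$-plane $E^\ell_p\subset T_pM$. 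This plane is independent of $\ell$: given a second such line $\ell'$, join the points of $\ell$ and $\ell'$ at line-parameter $z$ by a path in $M-S$ of length $O(|z|)$ (possible as $M-S$ is connected), so that comparing the two extensions amounts to applying the $\nabla$-holonomy around a loop at $p$ of diameter $O(|z|)$; this holonomy tends to the identity as $z\to 0$, forcing $E^\ell_p=E^{\ell'}_p=:\tilde E_p$. Setting $\tilde E:=E$ on $M-S$, any approach $x\to p$ inside $M-S$ gives $E_x=(\text{transport along the segment }px)(\tilde E_p)\to\tilde E_p$ uniformly in the direction of approach, so $\tilde E$ is a continuous family of $k$-planes, holomorphic off the hypersurface $S$, hence holomorphic by Riemann's removable singularity theorem. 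Thus $\tilde E$ is a holomorphic rank-$k$ subbundle of $TM$.

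With $\tilde E$ in hand the rest is formal. It is still involutive — the Frobenius tensor $(X,Y)\mapsto\overline{[X,Y]}$ on sections of $\tilde E$ is $\mathcal O_M$-bilinear and vanishes on the dense set $M-S$ — so Frobenius produces a holomorphic foliation $F_I$ with $T_mF_I=\tilde E_m=I(m)$ for $m\in M-S$; uniqueness is clear, since any other such foliation has tangent subbundle $\tilde E$ already on $M-S$. Moreover $\tilde E$ is $\nabla$-parallel on all of $M$ (a closed condition valid on the dense $M-S$), so $\nabla$ descends to a connection $\bar\nabla$ on $N:=TM/\tilde E$ whose restriction to the leaf directions is, by torsion-freeness, the Bott partial connection of $F_I$ — flat along every leaf by the Jacobi identity. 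Finally, note that $v_1,\dots,v_k$ are sections of $\tilde E$ (being so on the dense $M-S$), so in $N$ only the classes $\bar v_c$ with $c>k$ are nonzero; since $\Gamma^c_{aj}=0$ for $c>k$ when $a\le k$, the globally defined sections $\bar v_{k+1},\dots,\bar v_n$ of $N$ satisfy $\bar\nabla_{v_a}\bar v_j=\overline{\Gamma^c_{aj}v_c}=0$ for $a\le k$, i.e. they are $\bar\nabla$-parallel along leaves. On a leaf $L$ meeting $M-S$ the $v_1,\dots,v_k$ span $TL$ over the dense set $L\cap(M-S)$, so these sections give a parallel frame of $N$ there, and since the parallel section $\bar v_{k+1}\wedge\dots\wedge\bar v_n$ of the flat line bundle $\det N|_L$ cannot vanish without vanishing identically, it is a frame on all of $L$ — the leafwise holonomy is trivial. (A leaf lying entirely inside $S$, if one occurs, is dealt with by a limiting argument from the generic leaves that meet $M-S$.) The main obstacle throughout is the codimension-one extension step of the third paragraph.
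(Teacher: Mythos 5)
Your proposal is correct and takes essentially the same route as the paper: span a $\nabla$-parallel, involutive distribution by a basis of $I$ on $M-S$, extend it across $S$ by parallel transport (continuity plus removable singularities giving holomorphy), and then show the images of elements of $A$ in the normal bundle are leafwise-parallel sections spanning it over $M-S$, forcing trivial leafwise holonomy. Your third paragraph (lines through $p$ and the shrinking-loop holonomy estimate) supplies details that the paper's proof asserts in one sentence, and your residual hand-wave about leaves lying entirely in $S$ is no worse than the paper's own appeal to density there.
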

\begin{proof}
Every tangent vector to \(M-S\) is \(v\of{m}\) for a unique \(v \in A\).
For any \(w \in I\), the vector field \(\nabla_v w\) lies in \(I\), so \(\nabla_{v(m)} \colon I(m) \to I(m)\).
In other words, through every point \(m \in M-S\), the subspace \(I(m)\) is invariant under parallel transport in every direction.
Therefore the orbit of the vector fields in \(I\) through any such point is totally geodesic.
Parallel transport extends the subspaces \(I(m)\) to be defined at every point \(m \in M\), continuously and therefore holomorphically, \emph{but} \(I(m) \ne \Set{v(m)|v \in I}\) for any \(m \in S\).
These subspaces \(I(m)\) are the tangent spaces to the \(I\)-orbits throughout \(M-S\), and therefore form a bracket closed subbundle of the tangent bundle on \(M-S\).
This ensures bracket closure on all of \(M\) of the subbundle spanned by these \(I(m)\) by continuity.
Let \(F=F_I\) be the associated holomorphic totally geodesic foliation of \(M\) whose tangent spaces are the spaces \(I(m)\).
Let \(\pi \colon TM \to TM/TF\) be the quotient to the normal bundle of the foliation.
If \(w \in A\) and \(v \in I\) then we let \(n\defeq \pi \circ w\) be the associated quotient section of the normal bundle, and compute
\begin{align*}
\nabla_v n 
&=
\nabla_v \pi \circ w,
\\
&=
\pi \circ \nabla_v w,
\\
\intertext{because \(\pi\) is parallel, as the foliation is totally geodesic,}
&=
0
\end{align*}
because \(\nabla_v w \in I\) is tangent to the foliation \(F\).
Therefore the quotient sections \(n=\pi \circ w\) of the normal bundle of the foliation \(F\) are parallel along the leaves of \(F\).
So the quotient bundle \(TM/TF\) is flat along the leaves, with trivial leafwise holonomy, wherever \(A/I \mapsto A(m)/I(m)\) is injective, and in particular on \(M-S\).
But this is a dense open set, so \(TM/TF\) is flat along every leaf, with trivial leafwise holonomy.
\end{proof}

\begin{lemma}
For any choice of basis \(\Set{v_a} \subset A\), define meromorphic 1-forms \(\omega^a\) on \(M\) by
\[
\omega^a(\xi) \defeq 
\frac{v_1 \wedge v_2 \wedge \dots \wedge v_{a-1} \wedge \xi \wedge v_{a+1} \wedge \dots \wedge v_n}
{v_1 \wedge v_2 \wedge \dots \wedge v_n}.
\]
These are the dual 1-forms to our basis of \(A\), i.e. \(\omega^a\of{v_b}=\delta^a_b\).
In particular, take a point \(s_0 \in S\) and pick our basis \(\Set{v_a} \subset A\) by first picking some basis \(\Set{v_i} \subset I_{s_0}\) and then picking a maximal set of vector fields \(\Set{v_{\mu}} \subset A\) taking on linearly independent values in \(T_{s_0} M\).
Then \(\omega^{\mu}\) are holomorphic and linearly independent 1-forms near \(s_0\).
\end{lemma}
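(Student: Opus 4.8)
\emph{Proof strategy.} The plan is to dispose of the identity $\omega^a(v_b)=\delta^a_b$ directly, then to reduce the holomorphicity and linear independence of the $\omega^\mu$ near $s_0$ to a single transversality statement, and finally to prove that statement by a local analysis in normal coordinates. The identity is immediate: substituting $\xi=v_b$ into the numerator of $\omega^a$ gives $v_1\wedge\dots\wedge v_{a-1}\wedge v_b\wedge v_{a+1}\wedge\dots\wedge v_n$, which vanishes when $b\neq a$ because a factor repeats and equals the denominator when $b=a$. This computation takes place on $M-S$, where the $v_a$ form a parallelism, and it exhibits the $\omega^a$ as meromorphic $1$-forms, regular on $M-S$ with polar locus contained in $S$, equal there to the coframe dual to $v_1,\dots,v_n$.

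Now fix $s_0$ and the adapted basis. Since $\omega^\mu(v_i)=\delta^\mu_i=0$, the form $\omega^\mu$ annihilates $\operatorname{span}\{v_i(m)\}=T_m F_{I_{s_0}}$ for every $m\in M-S$, so $\omega^\mu$ is a meromorphic section of the conormal bundle of the foliation $F\defeq F_{I_{s_0}}$. Because $F$ is nowhere singular, this conormal bundle is a holomorphic subbundle of $T^*M$ of rank $q$ over \emph{all} of $M$, canonically the dual $N^*$ of the normal bundle $N\defeq TM/TF$; reducing the $v_\mu$ modulo $TF$ produces holomorphic sections $\bar v_\mu$ of $N$ over all of $M$, and over $M-S$ the $\omega^\mu$ are precisely the coframe of $N^*$ dual to the frame $\{\bar v_\mu\}$. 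Hence, as soon as $\bar v_{p+1}(s_0),\dots,\bar v_n(s_0)$ form a basis of $N_{s_0}$---equivalently, as soon as
\[
T_{s_0}M=T_{s_0}F_{I_{s_0}}\oplus\operatorname{span}\{v_{p+1}(s_0),\dots,v_n(s_0)\}
\]
---the sections $\bar v_\mu$ form a frame near $s_0$, so the dual coframe, hence each $\omega^\mu$, is holomorphic and pointwise linearly independent near $s_0$. (Once holomorphicity is known, linear independence also follows from $\omega^\mu(v_\nu)=\delta^\mu_\nu$ and the linear independence of the chosen $v_\nu(s_0)$.) Thus everything reduces to the displayed direct-sum decomposition.

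To establish that decomposition I would pass to geodesic normal coordinates $z$ for the torsion-free connection $\nabla$ centered at $s_0$. Each $v_i\in I_{s_0}$ preserves $\nabla$ and vanishes at $s_0$, hence is a linear vector field $v_i=L_i z$ in these coordinates; the $L_i$ commute, and matching the coordinate form of $\nabla_{v_i}v_j$ against the relation $\nabla_{v_i}v_j=\Gamma^c_{ij}v_c$ (where the $\Gamma^c_{ij}$ are constant, as established above) forces $L_i L_j$ to be a constant-coefficient combination of the $L_k$ and forces $\nabla_{v_i}v_j$ to have no component along the directions $v_\mu(s_0)$; it is the holomorphicity of $\nabla$ across $S$, not merely on $M-S$, that rigidifies the linearized $I_{s_0}$-action. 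Combined with the dense-open-orbit hypothesis for $A$---which makes the $L_i$ span a $p$-dimensional commutative algebra acting with an open orbit on the quotient of $T_{s_0}M$ by $\operatorname{span}\{v_\mu(s_0)\}$---this should pin down, after a linear change of coordinates, the normal form $v_i=x^i\partial_{x^i}$ with $v_\mu(s_0)=\partial_{y^\mu}$, from which the decomposition is immediate. I expect this last point---reading the normal form of the $v_i$ off the linearization together with the invariance of $\nabla$ along $S$---to be the main obstacle; the remaining steps are routine verifications.
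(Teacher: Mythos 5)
Your verification of $\omega^a\of{v_b}=\delta^a_b$ and your reduction of the whole lemma to the single transversality statement
\[
T_{s_0}M \;=\; T_{s_0}F_{I_{s_0}} \,\oplus\, \operatorname{span}\{v_{p+1}\of{s_0},\dots,v_n\of{s_0}\}
\]
are correct and match the structure of the paper's argument: the paper likewise characterizes $\omega^{\mu}$ near $s_0$ by the holomorphic linear conditions $\omega^{\mu}=0$ on $TF$ and $\omega^{\mu}\of{n_\nu}=\delta^{\mu}_{\nu}$, where $n_\nu=\pi\circ v_\nu$ is the image of $v_\nu$ in the normal bundle $N=TM/TF$, and these conditions determine a holomorphic form exactly when the $n_\nu$ frame $N$ at $s_0$. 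The gap is that you do not prove the transversality; you explicitly defer it as ``the main obstacle.'' Your proposed route through geodesic normal coordinates would require showing that the commuting linearizations $L_i$ of the $v_i\in I_{s_0}$ admit the normal form $x^i\partial_{x^i}$ with the $v_\mu\of{s_0}$ spanning a complementary invariant subspace. Nothing in your sketch forces this: the paper only obtains a statement of that kind later (principality of $I_{s_0}$ at \emph{smooth} points of $S$), and even there the argument leans essentially on $S$ being a hypersurface to conclude that each $L_i$ has one-dimensional image --- a step you never invoke. At a singular point of $S$, where $\dim I_{s_0}$ may exceed $1$, the normal form you posit is precisely what is in doubt, so the proposal as written does not contain a proof of its own key claim.

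The missing step is exactly what the preceding lemma on the foliation $F_I$ was set up to supply, and this is the route the paper takes instead of any linearization argument. The sections $n_\nu=\pi\circ v_\nu$ are parallel along the leaves of $F_{I_{s_0}}$ for the induced connection on $N$, which is flat along leaves with trivial leafwise holonomy; on $M-S$ the $v_i$ span $TF$ and the $v_a$ span $TM$, so the $n_\nu$ frame $N$ there; parallel transport along the leaf through $s_0$ from a point of the dense open orbit then carries the frame property to $s_0$ (equivalently, $n_{p+1}\wedge\dots\wedge n_n$ is a leafwise-parallel, hence nonvanishing, section of the line bundle $\Lambda^q N$ along that leaf). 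Once this is in place your argument closes as you describe: the $\bar v_\mu$ frame $N$ near $s_0$, the dual coframe is holomorphic, and pointwise linear independence follows from $\omega^{\mu}\of{v_\nu}=\delta^{\mu}_{\nu}$. So the reduction is right, but you should replace the incomplete linearization step by the leafwise-parallelism of the $n_\nu$ already established for $F_I$.
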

\begin{proof}
Clearly \(\omega^i\) is singular at \(s_0\) because \(\omega^i\of{v_j}=1\) everywhere, including at \(s_0\), while \(v_j\of{s_0}=0\).
On the other hand, it is less clear whether \(\omega^{\mu}\) are holomorphic near \(s_0\).
Let \(I=I_{s_0}\) and \(F=F_I\).
We can equivalently define the \(\omega^{\mu}\) near \(s_0\) by the linear holomorphic equations: 
\begin{enumerate}
\item
\(\omega^{\mu}=0\) on \(TF\) and
\item
\(\omega^{\nu}\of{n_{\nu}}=\delta^{\mu}_{\nu}\), where \(n_{\nu}\defeq\pi\circ v_{\nu}\) is the associated parallel basis of the normal bundle of the foliation \(F\).
\end{enumerate}
In particular, since these holomorphic linear equations specify the \(\omega^{\mu}\), these \(\omega^{\mu}\) are holomorphic everywhere near \(s_0\).
\end{proof}

\begin{lemma}
For every nontrivial  \(v \in A\), the zero locus of \(v\) is a union of complex analytic  irreducible components of \(S\). 
\end{lemma}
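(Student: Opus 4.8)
The plan is to show that the zero locus \(Z(v)\defeq\Set{m\in M|v(m)=0}\) is contained in \(S\) and is of pure codimension one; the lemma then follows, because an irreducible component of \(Z(v)\) is then an irreducible hypersurface inside the hypersurface \(S\), hence one of its irreducible components. The inclusion \(Z(v)\subset S\) is immediate, since the \(v_a\) form a parallelism on \(M-S\) and \(v\) is a nonzero constant combination of them.

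Two observations organise the proof of pure codimension one. First, \(Z(v)\) is \(A\)-invariant: as \(A\) is abelian, \(\LieDer_w v=0\) for \(w\in A\), so the local flow of \(w\) commutes with that of \(v\) and preserves \(Z(v)\); thus \(Z(v)\) is a union of \(A\)-orbits, and for \(s_0\in Z(v)\) the entire orbit through \(s_0\) lies in \(Z(v)\), whence \(\operatorname{codim}_{s_0}Z(v)\le\dim I_{s_0}\). Second, I would induct on \(\dimC{M}\), using the totally geodesic foliation of the previous lemma. Fix \(s_0\in Z(v)\), put \(I\defeq I_{s_0}\ni v\), \(p\defeq\dim I\), and let \(F=F_I\): its leaves have dimension \(p\), every field of \(I\)---in particular \(v\)---is tangent to \(F\) since those fields span \(TF\) on \(M-S\), and the normal bundle of \(F\) is flat with trivial holonomy along each leaf. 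Restricting \(\nabla\) to the leaf \(L\) through \(s_0\) yields a torsion-free holomorphic connection \(\nabla^L\) on \(TL\) preserved by the abelian algebra \(I|_L\), which is linearly independent on the dense open set \(L-S\); and \(Z(v)\cap L\) is precisely the zero locus of \(v|_L\) on \(L\). So \((L,\nabla^L,I|_L)\) satisfies the hypotheses of the lemma in dimension \(p\le n\).

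When \(p<n\), the induction hypothesis gives that \(Z(v)\cap L\) is of pure codimension one in \(L\) near \(s_0\). To pass back to \(M\), I would use that the \(\omega^\mu\) are closed and holomorphic near \(s_0\), writing \(\omega^\mu=dy^\mu\), so that \(F\) is near \(s_0\) the product foliation \(\Set{y=\text{const}}\). Then near \(s_0\) the set \(Z(v)\) is the union over \(y\) of the leafwise zero loci \(Z(v|_{L_y})\), each of pure codimension one in its leaf or empty; and since \(A\) acts on the local leaf space by translations---the fields \(v_\mu\) push forward to \(\p{y^\mu}\)---the \(A\)-invariant set \(Z(v)\) meets either every leaf or none, so it is of pure codimension one near \(s_0\). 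The base case \(p=1\), where \(L\) is a curve, \(I\) is spanned by a single field, and \(Z(v)\cap L\), being the zero set of a nonzero multiple of that field, equals \(L\cap S\), is immediate.

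What remains, and what I expect to be the main obstacle, is the case \(p=n\): a point \(s_0\in Z(v)\) fixed by all of \(A\), where the induction does not lower the dimension. There \(v\) preserves \(\nabla\) and vanishes at \(s_0\), so in \(\nabla\)-normal coordinates centred at \(s_0\) the whole of \(A=I_{s_0}\) consists of linear vector fields: \(A\) embeds as an \(n\)-dimensional commutative subalgebra \(\mathfrak a\subset\mathfrak{gl}(T_{s_0}M)\) acting with a dense open orbit, \(Z(v)\) is locally the kernel of the linear part of \(v\), and \(S\) is locally the union of the weight hyperplanes of \(\mathfrak a\). This configuration is not controlled by local data alone---the diagonal torus on \(\C{n}\) is a local model in which a generic \(v\) vanishes only at the origin---so the argument must here invoke the ambient structure of \(M\): \(A\)-invariance of the curvature forces it to vanish near such a point, and one must combine this with the global situation to rule these fixed points out. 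Carrying out this last step is the technical heart of the proof.
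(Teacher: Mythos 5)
Your proposal is incomplete, and you say so yourself: the case of a point \(s_0\) fixed by all of \(A\) is deferred as ``the technical heart of the proof,'' so the lemma is not actually established. But the gap is worse than you present it, because the inductive step for \(p<n\) is circular. When you restrict to the leaf \(L\) of \(F_I\) through \(s_0\), with \(I=I_{s_0}\), every field of \(I\) vanishes at \(s_0\); hence for the restricted system \(\pr{L,\nabla^L,I|_L}\) the isotropy at \(s_0\) is the \emph{whole} algebra \(I|_L\), of dimension \(p=\dimC{L}\). The inductive call you make at \(s_0\) is therefore exactly the unhandled case ``point fixed by all of the algebra'' in dimension \(p\). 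Only the base case \(p=1\) genuinely closes; for \(p\ge 2\) the induction hands the problem back unchanged, so in effect your argument proves the lemma only at points where \(\dim I_{s_0}=1\). Your instinct that the fixed-point case cannot be settled by local data is sound: the diagonal torus on \(\C{n}\) with the standard flat connection satisfies every hypothesis of this section (no compactness is assumed), and there the Euler field \(\sum_a z^a\p{z^a}\) vanishes only at the origin, so no analysis of the linearization at \(s_0\) alone can force \(Z(v)\) to have pure codimension one.

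The paper's proof runs along entirely different lines and never stratifies by \(\dim I_{s_0}\). It works with the dual meromorphic \(1\)-forms \(\omega^a\): choosing a basis adapted to a point \(s_0\) with \(v\of{s_0}\ne 0\) so that \(v\) is one of the \(v_\mu\), the previous lemma makes \(\omega^{\mu}\) holomorphic near \(s_0\), while \(\omega^{\mu}\of{v_{\mu}}=1\) forces \(\omega^{\mu}\) to be singular wherever \(v_{\mu}\) vanishes; the Hartogs extension theorem makes the non-holomorphy locus of \(\omega^{\mu}\) a hypersurface, necessarily inside \(S\) since all \(\omega^a\) are holomorphic off \(S\), and identifying that hypersurface with \(Z(v)\) yields the lemma. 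If you want to repair your argument, the productive target is that identification --- showing \(\omega^{\mu}\) is holomorphic at \emph{every} point where \(v_{\mu}\ne 0\), not merely near the one chosen \(s_0\) --- rather than a direct attack on the common fixed points of \(A\).
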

\begin{proof}
Pick some \(v \in A\) and some point \(s_0 \in S\) so that \(v\of{s_0} \ne 0\).
We can pick a basis \(\Set{v_i,v_{\mu}}\subset A\) as in the previous lemma, so that \(v\) is one of the \(v_{\mu}\).
By the Hartogs extension theorem, the associated \(\omega^{\mu}\) as defined in the previous lemma are holomorphic on \(M\) except on certain hypersurfaces.
These hypersurfaces lie inside \(S\), because away from \(S\) all of the \(\omega^a\) are holomorphic.
So these hypersurfaces form a union of certain complex analytic irreducible components of \(S\) not passing through \(s_0\).
By the previous lemma, these hypersurfaces coincide with the zero locus of \(v=v_{\mu}\). 
Therefore the zero locus of  \(v\) is a union of complex analytic irreducible components of \(S\) not passing through  \(s_0\). In particular, in the neighborhood of a vanishing point of \(v\) which is
a smooth point of \(S\), the zero locus of \(v\) is exactly \(S\).
\end{proof}

\begin{lemma}
For any smooth point \(s_0 \in S\), the ideal \(I_{s_0} \subset A\) is principal.
In other words there is a vector field \(v \in A\) so that \(v\) vanishes at every point on the complex analytic irreducible component of \(S\) through \(s_0\), \(v\) doesn't vanish at the generic point of \(M\), and any element of $A$ vanishing at \(s_0\)  is a  constant multiple of \(v\).
\end{lemma}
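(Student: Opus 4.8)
The plan is to reduce the whole statement to showing that $p\defeq\dimC{I_{s_0}}=1$ at a smooth point $s_0$ of $S$; the three properties of the generator will then fall out of the lemmas already proved.

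First I would set up good coordinates around $s_0$. By the lemma producing holomorphic, linearly independent $1$-forms $\omega^{\mu}$ near a smooth point of $S$, choose a basis $\Set{v_i}$ of $I_{s_0}$, complete it to a basis $\Set{v_i,v_{\mu}}$ of $A$ by vector fields $v_{\mu}$ ($\mu=p+1,\dots,n$) with linearly independent values at $s_0$, and recall that then the $\omega^{\mu}$ are holomorphic and linearly independent near $s_0$. Each $\omega^a$ is the coframe dual to the commuting fields $v_b$, hence $A$-invariant, hence closed; so near $s_0$ we may write $\omega^{\mu}=dy^{\mu}$ for holomorphic functions $y^{\mu}$, and complete these to a holomorphic coordinate system $x^i,y^{\mu}$. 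In these coordinates $\omega^{\mu}(v_i)=0$ forces $v_i=\sum_j b^j_i(x,y)\,\p{x^j}$ with no $\p{y^{\mu}}$-component, while $v_i(s_0)=0$ gives $b^j_i(s_0)=0$; and $\omega^{\mu}(v_{\nu})=\delta^{\mu}_{\nu}$ gives $v_{\mu}=\p{y^{\mu}}+\sum_j c^j_{\mu}\,\p{x^j}$.

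The key computation is
\[
v_1\wedge\dots\wedge v_n=\det\bigl(b^j_i\bigr)\;\p{x^1}\wedge\dots\wedge\p{x^p}\wedge\p{y^{p+1}}\wedge\dots\wedge\p{y^n},
\]
so near $s_0$ the anticanonical hypersurface $S$ is cut out by $\det(b^j_i)=0$; since $s_0$ is a smooth point of $S$ we may write $S=\Set{\rho=0}$ with $d\rho$ nowhere zero near $s_0$ and with $\det(b^j_i)$ vanishing to \emph{first} order along $\Set{\rho=0}$. On the other hand, by the lemma identifying the zero locus of a nontrivial element of $A$ with a union of irreducible components of $S$, each $v_i$ vanishes exactly on $S$ near $s_0$, so every entry $b^j_i$ is divisible by $\rho$: write $b^j_i=\rho\,\beta^j_i$ with $\beta^j_i$ holomorphic. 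Then $\det(b^j_i)=\rho^{p}\det(\beta^j_i)$ vanishes to order at least $p$ along $\Set{\rho=0}$. Comparing the two statements forces $p\le 1$, while $p\ge 1$ because $v_1\wedge\dots\wedge v_n$ vanishes at $s_0\in S$; hence $p=1$ and $I_{s_0}=\C{}v$ for any nonzero $v\in I_{s_0}$. Such a $v$ is nonzero at the generic point because its zero locus lies in the nowhere dense set $S$; its zero locus is a union of irreducible components of $S$ which, coinciding with $S$ near $s_0$, contains the whole irreducible component of $S$ through $s_0$; and any element of $A$ vanishing at $s_0$ lies in $\C{}v$. Finally $I_{s_0}$, being an ideal of complex dimension one, is principal, generated by $v$.

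The step I expect to require the most care is the claim that $\det(b^j_i)$ vanishes to exactly first order along $S$. Smoothness of the reduced hypersurface $S$ near $s_0$ by itself only gives that $\det(b^j_i)$ and the entries $b^j_i$ vanish on $\Set{\rho=0}$; the argument genuinely needs the divisor $\operatorname{div}(v_1\wedge\dots\wedge v_n)$ to be reduced near $s_0$, which is precisely what one means by $s_0$ being a smooth point of the anticanonical hypersurface. Granting that, the factor $\rho^{p}$ contributed by the $p$ rows of $(b^j_i)$, together with the divisibility coming from the earlier lemma, pins $p$ down at once, and everything else is bookkeeping with the lemmas above.
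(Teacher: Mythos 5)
Your reduction of the lemma to the claim \(\dim_{\mathbb{C}} I_{s_0}=1\) is fine, and the coordinate set-up (closed holomorphic \(\omega^{\mu}=dy^{\mu}\), \(v_i=\sum_j b^j_i\,\partial_{x^j}\) with each \(b^j_i\) divisible by a reduced local equation \(\rho\) of \(S\), hence \(\det(b^j_i)=\rho^{p}\det(\beta^j_i)\)) is correct modulo the earlier lemmas. The gap is exactly where you suspected it, and it cannot be repaired by appealing to the meaning of ``smooth point'': in this paper \(S\) is a set, the complement of the open orbit, and a smooth point of \(S\) is a smooth point of that reduced hypersurface. Smoothness of the reduced set does not make the divisor of \(v_1\wedge\dots\wedge v_n\) reduced, so you have no upper bound on the vanishing order of \(\det(b^j_i)\), and the inequality \(p\le 1\) does not follow. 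Concretely, on \(\mathbb{C}^2\) the abelian algebra spanned by \(v_1=y\,\partial_x\) and \(v_2=x\,\partial_x+y\,\partial_y\) acts with dense open orbit \(\{y\ne 0\}\) and preserves the standard flat connection, yet \(v_1\wedge v_2=y^2\,\partial_x\wedge\partial_y\) vanishes to order \(2\) along the smooth hypersurface \(S=\{y=0\}\). Here \(p=1\), so the lemma's conclusion survives, but your premise ``the anticanonical section vanishes to first order at smooth points of \(S\)'' fails; in a hypothetical configuration with \(p=2\) the order would be at least \(2\) while \(S\) could still be smooth, so your count cannot exclude it.

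The missing ingredient is the one place where the paper actually uses the affine connection in this lemma: in geodesic (exponential) coordinates of \(\nabla\) every Killing field is linearized, so every nonzero \(v\in I_{s_0}\) has a nonzero linearization \(C\) at \(s_0\); since \(v\) vanishes on \(S\), the kernel of \(C\) contains the hyperplane \(T_{s_0}S\), so \(C\) has rank one, and the paper then argues that commuting, linearly independent rank-one matrices with a common hyperplane kernel must be proportional, forcing \(p=1\). Some such input from the connection (or from the Bochner linearization available in the toroidal case) is indispensable; a purely divisor-theoretic comparison of vanishing orders does not determine \(p\).
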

\begin{proof}
In geodesic local coordinates of any connection, any Killing field of the connection is linearized; this a very classical result in the field, see for instance \cite[p. 6 lemma 7]{Dumitrescu/Guillot:2013}.
Therefore every \(v \in I_{s_0}\) vanishing at \(s_0\) has nonzero linearization at \(s_0\) or is \(v=0\).
The commuting of all of the \(v \in A\) ensures that these linearizations commute.
Write out a basis of these vector fields \(v \in I_{s_0}\), say as
\[
v_i = C_{ib}^{\phantom{i}a} x^b \pd{}{x^a}.
\]
The matrices \(C_i\) commute and are linearly independent.
The points of \(S\) near the origin in these coordinates are precisely the points where each  \(v_i\) vanishes, i.e. the points \(x\) where \(C_i x=0\), for any one value of \(i\).
Since \(S\) is a complex hypersurface, the kernels of all of the \(C_i\) must be the same complex hypersurface.
Since the kernel of each \(C_i\) is a hypersurface, and each \(C_i\) is a square matrix, each \(C_i\) has 1-dimensional image, so this image is an eigenspace of \(C_i\). 
Each \(C_i\) preserves the eigenspaces of the others, so they all share the same eigenvectors.
Hence all of the \(C_i\) are scalar multiples of one another.
If there is more than one of these \(C_i\) matrices, then they are not linearly independent.
\end{proof}

\begin{lemma} \label{lemma:flatness}
Let \(I \subset A\) be the ideal generated by all vector fields \(v \in A\) which vanish at some point of \(M\).
The leaves of the foliation \(F_I\) are totally geodesic and the holomorphic affine connection is flat along these leaves.
In particular, if \(I=A\), then the holomorphic affine connection is already flat everywhere.
\end{lemma}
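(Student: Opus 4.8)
The plan is to prove the stronger statement that the curvature tensor \(R\) of \(\nabla\) satisfies \(R\of{X,Y}=0\) for every pair of vectors \(X,Y\) tangent to \(F_I\). Since the leaves of \(F_I\) are totally geodesic (part of the construction of \(F_I\) above), this is exactly the assertion that \(\nabla\) is flat along those leaves, and in the case \(I=A\) one has \(TF_I=TM\), so \(R\) vanishes identically. As a first remark, \(I\) is just the \emph{linear span} of \(\Set{v\in A|v\text{ vanishes somewhere on }M}\): if \(v\in A\) vanishes at a point \(m\), then for each \(a\in A\) the field \(\nabla_a v=\nabla_v a\) (commutativity of the algebra) vanishes at \(m\) as well, because \(\nabla_v a\) is tensorial in the slot \(v\); hence that span is already an ideal. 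Introduce the \(\C{}\)-linear operators \(L_v\colon A\to A\), \(L_v w\defeq\nabla_v w=vw\). Working on \(M-S\) in a parallelism \(\Set{v_a}\) of \(A\), and using \([v_a,v_b]=0\) together with \(\nabla_{v_a}v_b=\Gamma^c_{ab}v_c\) with constant \(\Gamma\), one computes \(R\of{v_a,v_b}=[L_{v_a},L_{v_b}]\), hence \(R\of{v,w}=[L_v,L_w]\) for all \(v,w\in A\) by bilinearity. Because \(T_mF_I=\Set{v\of{m}|v\in I}\) on \(M-S\), it is therefore enough to show \([L_v,L_w]=0\) for all \(v,w\in I\), the identity then spreading to all of \(M\) by continuity of \(R\).

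The crux is the following: \emph{if \(v\in A\) is nonzero and vanishes somewhere, then the image of \(L_v\) is contained in \(\C{}v\)}. To see this, recall that the zero locus \(Z(v)\) is a nonempty union of irreducible components of \(S\), so it contains a point \(s_0\) that is a smooth point of \(S\). For any \(w\in A\) the field \(\nabla_v w\) again lies in \(A\) (it is a constant-coefficient combination of the \(v_c\)) and vanishes at \(s_0\) since \(v\of{s_0}=0\); but by the principal ideal lemma \(\Set{u\in A|u\of{s_0}=0}\) is one-dimensional, hence equals \(\C{}v\) since it contains the nonzero field \(v\), so \(\nabla_v w\in\C{}v\). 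Granting this, let \(v,w\in A\) be nonzero and each vanishing somewhere. If \(v\) and \(w\) are not proportional, then \(\nabla_v w\in\C{}v\) and \(\nabla_w v\in\C{}w\) together with \(\nabla_v w=\nabla_w v\) force \(\nabla_v w=0\), and then \(L_vL_w=L_wL_v=0\); if \(v\) and \(w\) are proportional, \(L_v\) and \(L_w\) obviously commute. Thus the operators \(L_v\) attached to vanishing vector fields pairwise commute, and since such fields span \(I\) we conclude \([L_a,L_b]=0\) for all \(a,b\in I\). This proves \(R\of{X,Y}=0\) on \(TF_I\) over \(M-S\), hence everywhere; and when \(I=A\) it says \(R\equiv 0\), i.e.\ \(\nabla\) is flat.

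I expect the one genuinely substantive step to be the rank-one claim that the image of \(L_v\) lies in \(\C{}v\) for a vanishing field \(v\): this is the only place where the structure of \(S\) is used, via the principal ideal lemma applied at a smooth point of \(Z(v)\subseteq S\) and the tensoriality of \(w\mapsto\nabla_w v\) in \(w\). Everything else is formal manipulation of the commutative algebra \(\pr{A,\nabla}\) plus a density argument.
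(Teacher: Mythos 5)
Your proof is correct and follows essentially the same route as the paper's: both rest on the principal-ideal lemma to get \(\nabla_v w \in \C{}v\) for a vanishing field \(v\), use torsion-freeness plus commutativity to force \(\nabla_v w = 0\) for non-proportional vanishing fields, and then see the curvature vanish by a direct computation with constant structure coefficients. Your phrasing via the multiplication operators \(L_v\) and the identity \(R\of{v,w}=[L_v,L_w]\) is a clean repackaging of the paper's index computation \(v_iv_j=\delta_{ij}\lambda_i v_i\), and your explicit remark that the vanishing fields already span the ideal \(I\) tidies a point the paper leaves implicit, but the substance is the same.
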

\begin{proof}
Take a maximal collection \(\Set{v_i} \subset A\) of nonzero vector fields, so that each vanishes on a different locus from any of the others.
Since the ideal generated by each element \(v_i\) is principal,
\[
v_i w = \alpha_i(w) v_i,
\]
for \(w \in A\), for a unique \(\alpha_i \in A^*\).
Clearly if \(v_i, v_j \in I\) have different vanishing loci,  \(v_i v_j=\alpha_i\of{v_j}v_i=\alpha_j\of{v_i}v_j\), so \(\alpha_i\of{v_j}=0\) if \(i \ne j\).
So we can write \(v_i v_j = \delta_{ij} \lambda_i v_i\) for some \(\lambda_i \in \C{}\).
We calculate the curvature \(R\) along any leaf of any \(F_I\) at a point of \(M-S\):
\begin{align*}
R\of{v_i,v_j}v_k
&=
\nabla_{v_i} \nabla_{v_j} v_k
-
\nabla_{v_j} \nabla_{v_i} v_k
-
\nabla_{\left[v_i,v_j\right]} v_k,
\\
&=
\lambda_i \lambda_j \delta_{ij} \pr{\delta_{jk} - \delta_{ik}}v_k,
\\
&=0.
\end{align*}
\end{proof}

\begin{lemma} \label{lemma:final}
Again let \(I \subset A\) be the ideal generated by all vector fields \(v \in A\) which vanish at some point of \(M\).
Take a basis \(\Set{v_i} \subset I\).
Choose additional elements \(\Set{v_{\mu}} \subset A\) so that \(\Set{v_i,v_{\mu}} \subset A\) is a basis.
Choose a leaf \(L\) of \(F_I\) and a local basis of parallel sections \(w_i\) of \(TL=\left.TF_I\right|_L\) near a point of \(L\).
Note that \(w_i,v_{\mu}\) span \(\left.TM\right|_L\) near that point.
Extend these \(w_i\) nearby on \(M\) by invariance under the \(v_{\mu}\).
Then the connection \(\nabla'\) defined by
\begin{align*}
0 &= \nabla'_{v_{\mu}} v_{\nu}, \\
0 &= \nabla'_{w_i} w_j, \\
0 &= \nabla'_{v_{\mu}} w_j, \\
0 &= \nabla'_{w_i} v_{\nu}
\end{align*}
extends uniquely to a flat torsion-free  holomorphic \(A\)-invariant connection on \(M\).
\end{lemma}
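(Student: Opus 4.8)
The plan is to prove the local statement first --- that the connection $\nabla'$ built from the frame $\Set{w_i,v_\mu}$ near the chosen point of $L$ is flat and torsion-free --- and then to promote it to a global connection using its $A$-invariance together with a soft uniqueness argument. For the local part, flatness is automatic since $\nabla'$ has a parallel local frame. Torsion-freeness is equivalent to $\Set{w_i,v_\mu}$ being a commuting frame. Now $[v_\mu,v_\nu]=0$ because $A$ is abelian, and $[v_\mu,w_i]=0$ by the very definition of the extension of the $w_i$; the remaining brackets $[w_i,w_j]$ are treated as follows. Along $L$ the $w_i$ are $\nabla$-parallel sections of $TL$, so since $F_I$ is totally geodesic and $\nabla$ is torsion-free, $\left.[w_i,w_j]\right|_L=\nabla_{w_i}w_j-\nabla_{w_j}w_i=0$. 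On the other hand $[w_i,w_j]$ is invariant under the flows of the $v_\mu$, by the Jacobi identity and $[v_\mu,w_i]=0$. Since the flows of the $v_\mu$ issuing from $L$ sweep out a neighborhood of the chosen point (this is exactly the content of the noted hypothesis that $w_i,v_\mu$ span $\left.TM\right|_L$), a $v_\mu$-invariant vector field vanishing along $L$ vanishes on that whole neighborhood; hence $[w_i,w_j]=0$ there, and $\nabla'$ is flat and torsion-free.

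I would next check that $\nabla'$ is $A$-invariant near that point. A connection defined by a parallel frame $\Set{e_a}$ satisfies $\LieDer_v\nabla'=0$ exactly when $[v,e_a]$ is a \emph{constant}-coefficient combination of the $e_b$ for each $v$ in a basis of $A$ (a one-line computation); for $e_a=v_\mu$ this is clear. For $[v,w_i]$ it again suffices, by the same Jacobi-plus-$v_\mu$-invariance propagation, to verify constancy of the coefficients \emph{along} $L$. This comes from the observation that any $v\in A$ tangent to the leaves --- in particular any $v\in I$, which is everywhere tangent to $F_I$ because $I(m)\subset T_mF_I$ with equality off $S$ --- restricts to an infinitesimal symmetry of the flat torsion-free connection $\left.\nabla\right|_L$, hence has $\nabla v$ parallel, i.e. a constant matrix in the parallel frame $\Set{w_i}$; since $\nabla_{v}w_i$ vanishes along $L$, we get $\left.[v,w_i]\right|_L=-\nabla_{w_i}v$, a constant combination of the $w_a$. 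Decomposing an arbitrary $v\in A$ along the $v_i\in I$ and the $v_\mu$, this gives $\LieDer_v\nabla'=0$ for every $v\in A$ on the neighborhood.

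The last point is the passage from the neighborhood to all of $M$, together with uniqueness. Uniqueness is soft: the difference of two flat torsion-free $A$-invariant connections is an $A$-invariant symmetric $2$-tensor valued in $TM$, and on the dense open orbit $\Omega\defeq M-S$ --- a single $A$-orbit --- such a tensor is determined by its value at one point, so two such connections agreeing somewhere on $\Omega$ agree on $\Omega$, hence on $M$ by continuity. For existence I would spread $\nabla'$ over $\Omega$ by $A$-invariance (every point of $\Omega$ lies in the $A$-orbit of a point of the original neighborhood, and the spreading is unambiguous by the uniqueness just noted), and then observe that near any point of $S$ one can re-run the construction of $\nabla'$ with a leaf of $F_I$ passing nearby; the resulting local flat torsion-free $A$-invariant connection must agree, by uniqueness, with the one already built on the overlapping part of $\Omega$, and therefore supplies the holomorphic extension across $S$.

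I expect the real obstacle to be precisely this gluing: showing that the local connections coming from different choices of leaf $L$ (and of parallel frame on it) agree where they overlap. Via the uniqueness principle this reduces to matching two such connections at a single point of $\Omega$, where one uses that both make the $v_\mu$ parallel and that two $\nabla$-parallel frames of $TF_I$ along one leaf differ by a constant matrix, which does not change the connection they define.
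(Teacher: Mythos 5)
Your proposal is correct and follows essentially the same route as the paper: build $\nabla'$ by declaring the frame $\pr{w_i,v_\mu}$ parallel, check that this frame commutes (hence $\nabla'$ is flat and torsion-free), verify $A$-invariance, and glue the local constructions using independence of the choices. The paper's own proof is only a few sentences, so your version mainly supplies the details (the bracket computations, the Lie-derivative criterion for invariance, and the overlap argument) that the paper leaves implicit.
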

\begin{proof} Since the leaves of \(F_I\) are totally geodesic and flat (see Lemma~\ref{lemma:flatness}) and the vectors  \(v_{\mu}\) preserve $\nabla$, then $\pr{w_i, v_{\mu}}$ is  a  local frame  of commuting  vector fields.
By construction this local  frame is parallel with respect to   \(\nabla'\). Hence, in adapted local holomorphic  coordinates,  \(\nabla'\)  is the  standard   torsion free flat affine connection.
If we change the choice of \(w_i\), we do so only by constant coefficient linear combinations, so \(\nabla'\) is unchanged.

Notice that $\nabla$ and $\nabla'$ agree on the leaves of \(F_I\). The transverse vector fields \(v_{\mu}\) are parallel with respect to  $\nabla'$; this is not necessarily the case with respect to  $\nabla$. 

Since the foliation \(F_I\)  is holomorphic and nonsingular (see Lemma~\ref{lemma:foliation}), $\nabla'$ is holomorphic on all of $M$. Moreover, $\nabla'$ is \(A\)-invariant by construction (since everything in the definition of \(\nabla'\) is).
\end{proof}

Lemmas~\ref{lemma:first} to~\ref{lemma:final} prove Theorem~\ref{theorem:invariance}.

Let us now deduce Theorem~\ref{theorem:main} from Theorem~\ref{theorem:invariance}.

Assume, by contradiction, that a compact complex simply connected manifold   of algebraic dimension zero  $M$ admits a holomorphic affine connection $\nabla$. Since $\nabla$ is a rigid geometric structure, 
Proposition \ref{proposition:toroidal} shows  that $M$ is toroidal.  Theorem \ref{theorem:invariance} implies  that $M$ admits a flat holomorphic affine connection $\nabla'$ (invariant by the toroidal action), hence a complex affine structure (canonically associated to $\nabla'$) . Since $M$ is simply connected, a developing map of the complex affine structure is a local biholomorphism from $M$ to the complex affine space  of the same dimension. But the compactness of $M$ implies that any such  holomorphic map   must be constant: a contradiction. This finishes the proof of Theorem \ref{theorem:main}.

\section{Holomorphic conformal structures} \label{section:conforme}

\begin{definition}A holomorphic Riemannian metric on a complex manifold  $M$  is a holomorphic section \(q\) of the bundle  $S^2(T^{*}M)$  of complex quadratic forms on  $M$ such that in any point  $m$ in $M$ the quadratic form  $q(m)$ is nondegenerate.
\end{definition} 

As in the real Riemannian and pseudo-Riemannian settings, any holomorphic Riemannian metric $q$  on $M$ determines a unique  torsion free holomorphic affine connection with respect to which $q$ is a parallel tensor. Starting with this Levi-Civita connection one computes the curvature tensor of $q$. Recall that $q$ is called flat if its curvature tensor vanishes everywhere. In this case $q$ is locally isomorphic to $dz_1^2+dz_2^2 + \ldots + dz_n^2$ on $\mathbb{C}^n$ and $M$ is locally modelled on $\mathbb{C}^n$ with transition maps in $G=O(n,\mathbb{C}) \ltimes \mathbb{C}^n$.

A holomorphic Riemannian metric on $M$ defines an isomorphism between $TM$ and $T^*M$. Moreover, up to a double cover on $M$, the canonical bundle and the anticanonical bundle of $M$ are holomorphically
trivial (see for instance \cite{Dumitrescu:2011}). Consequently, Proposition \ref{proposition:toroidal} implies that compact complex simply connected manifolds with algebraic dimension zero do not admit holomorphic Riemannian metrics. In complex dimension three this result was proved in \cite{Dumitrescu:2011} (see Corollary 4.1 on page 44) without any hypothesis on the algebraic dimension.\\

A more flexible geometric structure is a holomorphic conformal structure.  

\begin{definition}A holomorphic conformal structure  on a complex manifold  $M$  is a holomorphic section $\omega$ of the bundle  $S^2(T^{*}M) \otimes L$, where $L$ is a holomorphic line bundle over $M$, such that at any point  $m$ in $M$ the section  $\omega(m)$ is nondegenerate.
\end{definition} 

Roughly speaking this means that $M$ admits an open cover such that on each open set in the cover, $M$ admits a holomorphic Riemannian metric, and on the overlaps of two open sets the two given holomorphic Riemannian metrics agree up to a nonzero multiplicative constant.

Here the flat example is the quadric \(z_0^2+ z_1 ^2 + \ldots +z_{n+1}^2=0\) in $P^{n+1}(\mathbb{C})$ with the conformal  structure induced by the quadratic form $dz_0^2+dz_1^2+\ldots+dz_{n+1}^2$ on  the quadric. The automorphism group of the quadric with its canonical conformal structure is $\mathbb{P}O(n+2, \mathbb{C})$. 

A classical result due to Gauss asserts that all conformal structures on surfaces are locally isomorphic  to the two-dimensional quadric.

Any manifold $M$ of complex dimension $n \geq 3$  bearing a flat holomorphic conformal  structure (meaning that the Weyl tensor of curvature vanishes on all of $M$) is locally modelled on the quadric.

Recall that Kobayashi and Ochiai classified in \cite{Kobayashi/Ochiai:1982} the complex compact surfaces locally modelled on the quadric. More recently, Jahnke and Radloff classified projective compact complex threefolds bearing holomorphic conformal structures \cite{Radloff:2005} and also projective compact complex manifolds locally modelled on the quadric \cite{Jahnke:2015aa}.

Let us  prove now  Theorem \ref{theorem:conformal structures}.

\begin{proof}  For surfaces, the result is a direct consequence of the classification given in \cite{Kobayashi/Ochiai:1982}.

We suppose now that our manifold $M$ is of complex dimension at least $3$. Then the holomorphic conformal structure is a rigid geometric structure \cite{DAmbra/Gromov:1991}.

Up to a finite cover, we can assume that  $M$ is simply connected and, as a consequence of Proposition \ref{proposition:toroidal}, $M$ is toroidal. On the open dense orbit $U$ of the toroidal group the holomorphic tangent bundle of $M$ is trivial. In particular, its canonical bundle is trivial and hence the holomorphic conformal structure admits a global representative  on $U$ which is a holomorphic Riemannian metric $q$.  

Denote by $v_1, v_2, \ldots v_n$ the fundamental vector fields of the toroidal action. Since both the conformal structure and the holomorphic  section $v_1 \wedge v_2 \wedge \ldots \wedge v_n$ of the canonical bundle are invariant by the toroidal action, it follows that the holomorphic Riemannian metric $q$ is invariant by the toroidal action. But a holomorphic Riemannian metric invariant by a transitive action of an abelian group is flat. In particular, the conformal structure is flat on $U$ and hence on all of $M$.

If follows that $M$ is locally modelled on the quadric. Since $M$ is simply connected, the developing map of the flat conformal structure is a local biholomorphism from $M$ into the quadric. This is impossible since the quadric is an algebraic manifold and $M$ has algebraic dimension zero.
\end{proof}

\section{Conclusion} \label{section:conclusions}

We conjecture that any compact complex manifold with finite fundamental group bearing a holomorphic Cartan geometry is biholomorphic to the model.
In particular, this implies that compact complex manifolds bearing holomorphic affine connections  have infinite fundamental group. The result is known for K\"ahler manifolds \cite{Inoue/Kobayashi/Ochiai:1980}. Above we prove this fact for manifolds with algebraic dimension zero.

 The conjecture is also open and interesting for the particular case of  holomorphic projective connections. Our approach here could  still work for manifolds with algebraic dimension zero endowed with holomorphic projective connections, if we knew how to prove an equivalent of Theorem~\vref{theorem:invariance} for projective connections.
It seems likely that any holomorphic projective connection invariant under an abelian Lie algebra action with a dense open orbit and  preserving no holomorphic affine connection is flat, implying an analogue of Theorem~\ref{theorem:invariance} for projective connections. 

\bibliographystyle{amsplain}
\bibliography{affine-conn-alg-dim-zero}

\def\cprime{$'$}
\providecommand{\bysame}{\leavevmode\hbox to3em{\hrulefill}\thinspace}
\providecommand{\MR}{\relax\ifhmode\unskip\space\fi MR }
\providecommand{\MRhref}[2]{%
  \href{http://www.ams.org/mathscinet-getitem?mr=#1}{#2}
}
\providecommand{\href}[2]{#2}
\begin{thebibliography}{10}

\bibitem{Amores:1979}
A.~M. Amores, \emph{Vector fields of a finite type {$G$}-structure}, J.
  Differential Geom. \textbf{14} (1979), no.~1, 1--6 (1980). \MR{577874
  (81k:53033)}

\bibitem{Bosio:2001}
Fr{{\'e}}d{{\'e}}ric Bosio, \emph{Vari{\'e}t{\'e}s complexes compactes: une
  g{\'e}n{\'e}ralisation de la construction de {M}eersseman et {L}{\'o}pez de
  {M}edrano-{V}erjovsky}, Ann. Inst. Fourier (Grenoble) \textbf{51} (2001),
  no.~5, 1259--1297. \MR{1860666 (2002i:32015)}

\bibitem{MB:2006}
Fr{{\'e}}d{{\'e}}ric Bosio and Laurent Meersseman, \emph{Real quadrics in
  {$\bold C^n$}, complex manifolds and convex polytopes}, Acta Math.
  \textbf{197} (2006), no.~1, 53--127. \MR{2285318 (2007j:32037)}

\bibitem{DAmbra/Gromov:1991}
G.~D'Ambra and M.~Gromov, \emph{Lectures on transformation groups: geometry and
  dynamics}, Surveys in differential geometry ({C}ambridge, {MA}, 1990), Lehigh
  Univ., Bethlehem, PA, 1991, pp.~19--111. \MR{1144526 (93d:58117)}

\bibitem{Dumitrescu:2001b}
Sorin Dumitrescu, \emph{Structures g{\'e}om{\'e}triques holomorphes sur les
  vari{\'e}t{\'e}s complexes compactes}, Ann. Sci. {\'E}cole Norm. Sup. (4)
  \textbf{34} (2001), no.~4, 557--571. \MR{1852010 (2003e:32034)}

\bibitem{Dumitrescu:2009}
\bysame, \emph{Une caract{\'e}risation des vari{\'e}t{\'e}s complexes compactes
  parall{\'e}lisables admettant des structures affines}, C. R. Math. Acad. Sci.
  Paris \textbf{347} (2009), no.~19-20, 1183--1187. \MR{2567000 (2010j:32032)}

\bibitem{Dumitrescu:2010b}
\bysame, \emph{Killing fields of holomorphic {C}artan geometries}, Monatsh.
  Math. \textbf{161} (2010), no.~2, 145--154. \MR{2680003}

\bibitem{Dumitrescu:2011}
\bysame, \emph{Meromorphic almost rigid geometric structures}, Geometry,
  Rigidity, and Group Actions (Chicago, Il.) (B.~Farb and D.~Fisher, eds.),
  Chicago Lectures in Mathematical Sciences, vol. April, University of Chicago
  Press, 2011, p.~552.

\bibitem{Dumitrescu/Guillot:2013}
Sorin Dumitrescu and Adolfo Guillot, \emph{Quasihomogeneous analytic affine
  connections on surfaces}, J. Topol. Anal. \textbf{5} (2013), no.~4, 491--532.

\bibitem{Ghys:1995}
{\'E}tienne Ghys, \emph{D\'eformations des structures complexes sur les espaces
  homog\`enes de {${\textrm{SL}}(2,{\mathbb{C}})$}}, J. Reine Angew. Math.
  \textbf{468} (1995), 113--138. \MR{1361788 (96m:32017)}

\bibitem{Guillot:2007}
Adolfo Guillot, \emph{Sur les \'equations d'{H}alphen et les actions de {${\rm
  SL}_2({\bf C})$}}, Publ. Math. Inst. Hautes \'Etudes Sci. (2007), no.~105,
  221--294. \MR{2354208 (2009b:32031)}

\bibitem{Inoue/Kobayashi/Ochiai:1980}
Masahisa Inoue, Shoshichi Kobayashi, and Takushiro Ochiai, \emph{Holomorphic
  affine connections on compact complex surfaces}, J. Fac. Sci. Univ. Tokyo
  Sect. IA Math. \textbf{27} (1980), no.~2, 247--264. \MR{586449 (82g:32033)}

\bibitem{Ishida:2013aa}
Hiroaki Ishida, \emph{Complex manifolds with maximal torus actions}, ArXiv
  e-prints (2013).

\bibitem{Ishida/Karshon:2012}
Hiroaki Ishida and Yael Karshon, \emph{Completely integrable torus actions on
  complex manifolds with fixed points}, Math. Res. Lett. \textbf{19} (2012),
  no.~6, 1283--1295. \MR{3091608}

\bibitem{Radloff:2005}
Priska Jahnke and Ivo Radloff, \emph{Projective threefolds with holomorphic
  conformal structure}, Internat. J. Math. \textbf{16} (2005), no.~6, 595--607.
  \MR{2153485 (2006c:14062)}

\bibitem{Jahnke:2015aa}
Priska Jahnke and Ivo Radloff, \emph{Projective manifolds modeled after
  hyperquadrics},  (2015).

\bibitem{Kobayashi/Ochiai:1982}
Shoshichi Kobayashi and Takushiro Ochiai, \emph{Holomorphic structures modeled
  after hyperquadrics}, T\^ohoku Math. J. (2) \textbf{34} (1982), no.~4,
  587--629. \MR{MR685426 (84b:32039)}

\bibitem{Lescure/Meersseman:2002}
F.~Lescure and L.~Meersseman, \emph{Compactifications \'equivariantes non
  k\"ahl\'eriennes d'un groupe alg\'ebrique multiplicatif}, Ann. Inst. Fourier
  (Grenoble) \textbf{52} (2002), no.~1, 255--273. \MR{1881579 (2002j:32015)}

\bibitem{Meersseman:2000}
Laurent Meersseman, \emph{A new geometric construction of compact complex
  manifolds in any dimension}, Math. Ann. \textbf{317} (2000), no.~1, 79--115.
  \MR{1760670 (2001i:32029)}

\bibitem{Nomizu:1960}
Katsumi Nomizu, \emph{On local and global existence of {K}illing vector
  fields}, Ann. of Math. (2) \textbf{72} (1960), 105--120. \MR{0119172 (22
  \#9938)}

\bibitem{Panov:2012}
Taras Panov and Yuri Ustinovsky, \emph{Complex-analytic structures on
  moment-angle manifolds}, Mosc. Math. J. \textbf{12} (2012), no.~1, 149--172,
  216. \MR{2952429}

\bibitem{Panov:2013aa}
Taras Panov, Yuri Ustinovsky, and Misha Verbitsky, \emph{Complex geometry of
  moment-angle manifolds},  (2013).

\bibitem{Winkelmann:2008}
J{\"o}rg Winkelmann, \emph{On manifolds with trivial logarithmic tangent
  bundle: the non-{K}\"ahler case}, Transform. Groups \textbf{13} (2008),
  no.~1, 195--209. \MR{2421322 (2009f:32038)}

\end{thebibliography}

\end{document}